\numberwithin{equation}{section}
\theoremstyle{plain}
\newtheorem{theorem}{Theorem}[section]
\newtheorem{lemma}[theorem]{Lemma}
\newtheorem{corollary}[theorem]{Corollary}
\newtheorem{definition}[theorem]{Definition}
\newtheorem{remark}[theorem]{Remark}
\newcommand{\vct}[1]{\bm{#1}}
\newcommand{\mtx}[1]{\bm{#1}}
\newcommand{\lspan}{\operatorname{span}}
\newcommand{\diag}{\operatorname{diag}}
\newcommand{\px}{\mathbold{\Phi}}
\newcommand{\py}{\mathbold{\Psi}}
\newcommand{\pxk}{\mathbold{\Phi}_{1:k}}
\newcommand{\pyk}{\mathbold{\Psi}_{1:k}}
\newcommand{\normf}{\operatorname{F}}
\newcommand{\pxh}{\widehat{\bf \Phi}}
\newcommand{\pyh}{\widehat{\bf \Psi}}
\newcommand{\pxhk}{\widehat{\bf \Phi}_{1:k}}
\newcommand{\pyhk}{\widehat{\bf \Psi}_{1:k}}
\newcommand{\s}{\mtx{\Sigma}}
\newcommand{\sx}{\mtx{\Sigma}_x}
\newcommand{\sy}{\mtx{\Sigma}_y}
\newcommand{\sxy}{\mtx{\Sigma}_{xy}}
\newcommand{\sh}{\widehat{\mtx{\Sigma}}}
\newcommand{\sxh}{\widehat{\mtx{\Sigma}}_x}
\newcommand{\syh}{\widehat{\mtx{\Sigma}}_y}
\newcommand{\sxyh}{\widehat{\mtx{\Sigma}}_{xy}}
\newcommand{\syxh}{\widehat{\mtx{\Sigma}}_{yx}}
\newcommand{\wt}[1]{\widetilde{#1}}
\newcommand{\wh}[1]{\widehat{#1}}
\newcommand{\mc}[1]{\mathcal{#1}}
\newcommand{\norm}[1]{{\left\vert\kern-0.25ex\left\vert\kern-0.25ex\left\vert #1 
    \right\vert\kern-0.25ex\right\vert\kern-0.25ex\right\vert}}
\newcommand{\half}{1/2}
\newcommand{\lossmax}{\mathcal{L}_{\max}}
\newcommand{\lossave}{\mathcal{L}_{\operatorname{ave}}}
\newcommand{\lossgao}{\overline{\mathcal{L}}_{\operatorname{ave}}}
\newcommand{\losscai}{\overline{\mathcal{L}}_{\max}}
\newcommand{\lossjoint}{\overline{\mathcal{L}}_{\operatorname{joint}}}
\newcommand{\strangesection}[1]{\renewcommand{\thesection}{#1}\section}
\def\argmax{\mathop{\rm arg\, max}}
\newcommand{\bel}{\begin{eqnarray}\label}
\newcommand{\eel}{\end{eqnarray}}
\newcommand{\bes}{\begin{eqnarray*}}
\newcommand{\ees}{\end{eqnarray*}}
\newcommand{\bei}{\begin{itemize}}
\newcommand{\beiftnt}{\begin{itemize}\footnotesize}
\newcommand{\eei}{\end{itemize}}
\def\benu{\begin{enumerate}}
\def\eenu{\end{enumerate}}
\def\argmax{\mathop{\rm arg\, max}}
\def\E{{\mathbb{E}}}
\def\P{{\mathbb{P}}}
\def\complex{\mathop{{\rm I}\kern-.58em\hbox{\rm C}}\nolimits}
\def\diag{\hbox{\rm diag}}
\def\trace{\hbox{\rm trace}}
\def\Cov{\hbox{\rm Cov}}
\def\Var{\hbox{\rm Var}}
\def\mathbold{\boldsymbol} 
\def\ba{\mathbold{a}}
\def\bA{\mathbold{A}}
\def\hbA{{\widehat{\bA}}}
\def\bB{\mathbold{B}}
\def\hbB{{\widehat{\bB}}}
\def\bD{\mathbold{D}}
\def\hbD{{\widehat{\bD}}}
\def\bE{\mathbold{E}}
\def\bI{\mathbold{I}}
\def\bJ{\mathbold{J}}
\def\bM{\mathbold{M}}
\def\bP{\mathbold{P}}
\def\bQ{\mathbold{Q}}
\def\bu{\mathbold{u}}
\def\bU{\mathbold{U}}\def\hbU{{\widehat{\bU}}}
\def\bv{\mathbold{v}}
\def\bV{\mathbold{V}}\def\hbV{{\widehat{\bV}}}
\def\bw{\mathbold{w}}
\def\bW{\mathbold{W}}
\def\bx{\mathbold{x}}
\def\by{\mathbold{y}}
\def\bZ{\mathbold{Z}}
\def\htheta{\widehat{\theta}}
\def\hlambda{\widehat{\lambda}}
\def\bLambda{\mathbold{\Lambda}}\def\hbLambda{{\widehat{\bLambda}}}
\def\bSigma{\mathbold{\Sigma}}
\def\bphi{\mathbold{\phi}}
\def\bpsi{\mathbold{\psi}}
\def\bOmega{\mathbold{\Omega}}
\def\0{\mathbold{0}}
\begin{document}

\title{Subspace Perspective on Canonical Correlation Analysis: Dimension Reduction and Minimax Rates} 
\author{Zhuang Ma~~~and~~~Xiaodong Li}

\date{}
\maketitle

\begin{abstract}
Canonical correlation analysis (CCA) is a fundamental statistical tool for exploring the correlation structure between two sets of random variables. In this paper, motivated by the recent success of applying CCA to learn low dimensional representations of high dimensional  objects, we propose two losses based on the principal angles between the model spaces spanned by the sample canonical variates and their population correspondents, respectively. We further characterize the non-asymptotic error bounds for the estimation risks under the proposed error metrics, which reveal how the performance of sample CCA depends adaptively on key quantities including the dimensions, the sample size, the condition number of the covariance matrices and particularly the population canonical correlation coefficients. The optimality of our uniform upper bounds is also justified by lower-bound analysis based on stringent and localized parameter spaces. To the best of our knowledge, for the first time our paper separates $p_1$ and $p_2$ for the first order term in the upper bounds without assuming the residual correlations are zeros. More significantly, our paper derives $(1- \lambda_k^2)(1-\lambda_{k+1}^2)/(\lambda_k - \lambda_{k+1})^2$ for the first time in the non-asymptotic CCA estimation convergence rates, which is essential to understand the behavior of CCA when the leading canonical correlation coefficients are close to $1$.
\end{abstract}



\strangesection{1}{Introduction}
Canonical correlation analysis (CCA), first introduced by \cite{hotelling36}, is a fundamental statistical tool to characterize the relationship between two groups of random variables and finds a wide range of applications across many different fields. For example, in genome-wide association study (GWAS), CCA is used to discover the genetic associations between the genotype data of single nucleotide polymorphisms (SNPs) and the phenotype data of gene expression levels \citep{witten2009penalized, chen2012structured}. In information retrieval, CCA is used to embed both the search space (e.g. images) and the query space (e.g. text) into a shared low dimensional latent space such that the similarity between the queries and  the candidates can be quantified \citep{rasiwasia2010new, gong2014multi}.  In natural language processing, CCA is applied to the word co-occurrence matrix and learns vector representations of the words which capture the semantics  \citep{dhillon11, faruqui2014improving}. Other applications, to name a few, include fMRI data analysis \citep{friman2003adaptive}, computer vision \citep{kim2007tensor} and speech recognition \citep{arora2013multi, wang2015deep}. 

The enormous empirical success motivates us to revisit the estimation problem of canonical correlation analysis. Two theoretical questions are naturally posed: What are proper error metrics to quantify the discrepancy between population CCA and its sample estimates? And under such metrics, what are the quantities that characterize the fundamental statistical limits? 

The justification of loss functions, in the context of CCA, has seldom appeared in the literature. From first principles that the proper metric to quantify the estimation loss should depend on the specific purpose of using CCA, we find that the applications discussed above mainly fall into two categories: identifying variables of interest and dimension reduction.

The first category, mostly in genomic research \citep{witten2009penalized, chen2012structured},  treats one group of variables as responses and the other group of variables as covariates. The goal is to discover the specific subset of the covariates that are most correlated with the responses. Such applications are featured by low signal-to-noise ratio and the interpretability of the results is the major concern.

In contrast, the second category is investigated extensively in statistical machine learning and engineering community where CCA is used to learn low dimensional latent representations of complex objects such as images \citep{rasiwasia2010new}, text \citep{dhillon11} and speeches \citep{arora2013multi}. These scenarios are usually accompanied with relatively high signal-to-noise ratio and the prediction accuracy, using the learned low dimensional embeddings as the new set of predictors, is of primary interest. In recent years, there has been a series of publications establishing fundamental theoretical guarantees for CCA to achieve sufficient dimension reduction (\cite{kakade07, foster08, SridharanK08, fukumizu2009kernel, chaudhuri2009multi} and many others).

In this paper, we aim to address the problems raised above by treating CCA as a tool for dimension reduction.

\subsection{Population and Sample CCA}
Suppose $\bx = [X_1, \ldots, X_{p_1}]^\top \in\mathbb{R}^{p_1}$ and $\by = [Y_1, \ldots, Y_{p_2}]^\top \in\mathbb{R}^{p_2}$ are two sets of variates with the joint covariance matrix
\begin{equation}
\label{eq:covariance}
\text{Cov}\left(\begin{bmatrix} \vct{x} \\ \vct{y} \end{bmatrix}\right) = \mtx{\Sigma} := \begin{bmatrix} \mtx{\Sigma}_{x} & \mtx{\Sigma}_{xy} \\ \mtx{\Sigma}_{xy}^\top & \mtx{\Sigma}_y \end{bmatrix}. 
\end{equation}
For simplicity, we assume 
\[
\E(X_i) =0,~i =1, \ldots, p_1, \quad \E(Y_j) = 0,~j = 1, \ldots, p_2.
\]
On the population level, CCA is designed to extract the most correlated linear combinations between two sets of random variables sequentially: The $i$th pair of \textbf{canonical variables} $U_i = \bphi_i^\top\bx$ and $V_i = \bpsi_i^\top\by$ maximizes
\[
\lambda_i = \text{Corr}(U_i, V_i)
\]
such that $U_i$ and $V_i$ have unit variances and they are uncorrelated to all previous pairs of canonical variables. Here $(\bphi_i, \bpsi_i)$ is called the $i$th pair of \textbf{canonical loadings} and $\lambda_i$ is the $i$th \textbf{canonical correlation}.\\

It is well known in multivariate statistical analysis that the canonical loadings can be found recursively by the following criterion:
\begin{equation}
\begin{aligned}
(\bphi_i, \bpsi_i)= &\argmax  &&   \bphi^\top\sxy\bpsi
\\
~&\text{subject to} &&\bphi^\top\sx\bphi=1,~ \bpsi^\top\sy\bpsi=1;
\\
~&~&& \bphi^\top\sx \bphi_j=0, ~\bpsi^\top\sy\bpsi_j=0,~ \forall~1\leq j\leq i-1.
\end{aligned}
\label{eq-def-cca}
\end{equation}
Although this criterion is a nonconvex optimization, it can be obtained easily by spectral methods: Define $\px:=[\bphi_1, \cdots, \bphi_{p_1 \wedge p_2}]$, $\py:=[\bpsi_1, \cdots, \bpsi_{p_1 \wedge p_2}]$ and $\bLambda:=\diag(\lambda_1, \cdots, \lambda_{p_1 \wedge p_2})$. Then $\lambda_1, \ldots, \lambda_{p_1 \wedge p_2}$ are singular values of $\sx^{-\half}\sxy\sy^{-\half}$, and $\sx^{\half}\px, \sy^{\half}\py$ are actually left and right singular vectors of $\sx^{-\half}\sxy\sy^{-\half}$, respectively.\\

%
%
%


\subsection{Canonical variables versus canonical loadings}

%
%

For any given estimates of the leading $k$ canonical loadings, denoted by $\{(\widehat{\vct{\phi}}_i, \widehat{\vct{\psi}}_i)\}_{i=1}^k$, the corresponding estimates for the canonical variables can be represented by 
\[
\widehat{U}_i = \widehat{\vct{\phi}}_i^\top \vct{x}, \quad \widehat{V}_i = \widehat{\vct{\psi}}_i^\top \vct{y}, \quad i = 1 \ldots, p_1 \wedge p_2.
\]
To quantify the estimation loss, generally speaking, we can either focus on measuring the difference between the canonical loadings $\{(\vct{\phi}_i, \vct{\psi}_i)\}_{i=1}^k$ and $\{(\widehat{\vct{\phi}}_i, \widehat{\vct{\psi}}_i)\}_{i=1}^k$ or measuring the difference between the canonical variables $\{(U_i, V_i)\}_{i=1}^k$ and $\{(\widehat{U}_i, \widehat{V}_i)\}_{i=1}^k$. Here $\vct{x}, \vct{y}$ in the definition of $\{(U_i, V_i)\}_{i=1}^k$ and $\{(\widehat{U}_i, \widehat{V}_i)\}_{i=1}^k$ are independent of the samples based on which $\{(\widehat{\vct{\phi}}_i, \widehat{\vct{\psi}}_i)\}_{i=1}^k$ are constructed. Therefore, for the discrepancy between the canonical variables, there is an extra layer of randomness.


As discussed above, in modern machine learning applications such as natural language processing and information retrieval, the leading sample canonical loadings are used for dimension reduction, i.e., for a new observation $(\vct{x}_0, \vct{y}_0)$, ideally we hope to use the corresponding values of the canonical variables $(u_i = \vct{\phi}_i^\top \vct{x}_0)_{i=1}^k$ and $(v_i = \vct{\psi}_i^\top \vct{y}_0)_{i=1}^k$ to represent the observation in a low dimension space. Empirically, the actual low dimensional representations are $(\hat{u}_i = \widehat{\vct{\phi}}_i^\top \vct{x}_0)_{i=1}^k$ and $(\hat{v}_i = \widehat{\vct{\psi}}_i^\top \vct{y}_0)_{i=1}^k$. Therefore, the discrepancy between the ideal dimension reduction and actual dimension reduction should be explained by how well $\{(\widehat{U}_i, \widehat{V}_i)\}_{i=1}^k$ approximate $\{(U_i, V_i)\}_{i=1}^k$. Consequently, we choose to quantify the difference between the sample and population canonical variables instead of the canonical loadings.

\subsection{Linear span}
However, there are still many options to quantify how well the sample canonical variables approximate their population correspondents. To choose suitable losses, it is convenient to come back to specific applications to get some inspiration. \\

Motivated by applications in natural language processing and information retrieval, the model of multi-view sufficient dimension reduction has been studied in \cite{foster08}. Roughly speaking, a statistical model was proposed by \cite{foster08} to study how to predict $Z$ using two sets of predictors denoted by $\vct{x} = [X_1, \ldots, X_{p_1}]^\top$ and $\vct{y} = [Y_1, \ldots, Y_{p_2}]^\top$, where the joint covariance of $(Z, \vct{x}, \vct{y})$ is
\[
\text{Cov}\left(\begin{bmatrix} \vct{x} \\ \vct{y} \\ Z \end{bmatrix}\right) = \begin{bmatrix} \mtx{\Sigma}_x & \mtx{\Sigma}_{xy} & \vct{\sigma}_{xz} \\ \mtx{\Sigma}_{xy}^\top & \mtx{\Sigma}_y & \vct{\sigma}_{yz} \\ \vct{\sigma}_{xz}^\top & \vct{\sigma}_{yz}^\top & \sigma_z^2 \end{bmatrix}. 
\]
It was proven in \cite{foster08} that under certain assumptions, the leading $k$ canonical variables $U_1, \ldots U_k$ are sufficient dimension reduction for the linear prediction of $Z$; That is, the best linear predictor of $Z$ based on $X_1, \ldots, X_{p_1}$ is the same as the best linear predictor based on $U_1, \ldots U_k$. (Similarly, the best linear predictor of $Z$ based on $Y_1, \ldots, Y_{p_2}$ is the same as the best linear predictor based on $V_1, \ldots V_k$.) 

Notice that the best linear predictor is actually determined by the set of all linear combinations of $U_1, \ldots, U_k$ (referred to as the ``model space" in the literature of linear regression for prediction), which we denote as $\lspan(U_1, \ldots, U_k)$. Inspired by \cite{foster08}, we propose to quantify the discrepancy between $\{U_i\}_{i=1}^k$ and $\{\widehat{U}_i\}_{i=1}^k$ by the discrepancy between the corresponding subspaces $\lspan(\widehat{U}_1, \ldots, \widehat{U}_k)$ and $\lspan(U_1, \ldots, U_k)$ (and similarly measure the difference between  $\{V_i\}_{i=1}^k$ and $\{\widehat{V}_i\}_{i=1}^k$ by the distance between $\lspan(\widehat{V}_1, \ldots, \widehat{V}_k)$ and $\lspan(V_1, \ldots, V_k)$).


\subsection{Hilbert spaces and principal angles}
In this section, we define the discrepancy between $\widehat{\mathcal{M}}_{(U, k)} = \lspan(\widehat{U}_1, \ldots, \widehat{U}_k)$ and $\mathcal{M}_{(U, k)} = \lspan(U_1, \ldots, U_k)$ by introducing a Hilbert space. Noting that for any given sample $\{(x_i, y_i)\}_{i=1}^n$, both $\widehat{\mathcal{M}}_{(U, k)}$ and $\mathcal{M}_{(U, k)}$ are composed by linear combinations of $X_1, \ldots, X_{p_1}$. Denote the set of all possible linear combinations as
\begin{equation}
\label{eq:hilbert}
\mathcal{H}=\lspan(X_1, \ldots, X_{p_1}).
\end{equation}
Moreover, for any $X_1, X_2\in \mathcal{H}$, we define a bilinear function $\langle X_1, X_2 \rangle$ $:=\text{Cov}(X_1, X_2)$ $=\E(X_1X_2)$. It is easy to show that $\langle \cdot, \cdot \rangle$ is an inner product and $(\mathcal{H}, \langle \cdot, \cdot \rangle)$ is a $p_1$-dimensional Hilbert space, which is isomorphic to $\mathbb{R}^{p_1}$.\\

With the natural covariance-based inner product, we know both $\widehat{\mathcal{M}}_{(U, k)}$ and $\mathcal{M}_{(U, k)}$ are subspaces of $\mathcal{H}$, so it is natural to define their discrepancy based on their principal angles $\frac{\pi}{2} \geq \theta_1 \geq \ldots \geq \theta_k \geq 0$. In the literature of statistics and linear algebra, two loss functions are usually used
\[
\lossmax(\lspan(\widehat{U}_1, \ldots, \widehat{U}_k),~ \lspan(U_1, \ldots, U_k)) = \sin^2 (\theta_1)
\]
and
\[
\lossave(\lspan(\widehat{U}_1, \ldots, \widehat{U}_k),~ \lspan(U_1, \ldots, U_k)) = \frac{1}{k}(\sin^2 (\theta_1)+ \ldots + \sin^2(\theta_k))
\]
In spite of a somewhat abstract definition, we have the following clean formula for these two losses:
\begin{theorem}
\label{thm:formula}
Suppose for any $p_1 \times k$ matrix $\mtx{A}$, $\mtx{P}_{\mtx{A}}$ represents the orthogonal projector onto the column span of $\mtx{A}$. Assume the observed sample is fixed. Then
\begin{align}
\label{eq:formula1}
\lossave(\lspan(\widehat{U}_1, \ldots, \widehat{U}_k), \lspan(U_1, \ldots, U_k)) &= \frac{1}{2k}\left\|\mtx{P}_{\mtx{\Sigma}_x^{\half}\pxhk} - \mtx{P}_{\mtx{\Sigma}_x^{\half}\pxk}\right\|_F^2   \nonumber
\\
&= \frac{1}{k}\left\|\left(\mtx{I}_{p_1} - \mtx{P}_{\mtx{\Sigma}_x^{\half}\pxk}\right)\mtx{P}_{\mtx{\Sigma}_x^{\half}\pxhk}\right\|_F^2
\\
&= \frac{1}{k} \min_{\mtx{Q} \in \mathbb{R}^{k \times k}} \E \left[\| \vct{u}^\top - \widehat{\vct{u}}^\top \mtx{Q}\|_2^2\right] \nonumber
\\
&:= \lossave(\mtx{\Phi}_{1:k}, \widehat{\mtx{\Phi}}_{1:k}) \nonumber
\end{align}
and
\begin{align}
\label{eq:formula2}
\lossmax(\lspan(\widehat{U}_1, \ldots, \widehat{U}_k), \lspan(U_1, \ldots, U_k)) & = \left\|\mtx{P}_{\mtx{\Sigma}_x^{\half}\pxhk} - \mtx{P}_{\mtx{\Sigma}_x^{\half}\pxk}\right\|^2 \nonumber
\\
& = \left\|\left(\mtx{I}_{p_1} - \mtx{P}_{\mtx{\Sigma}_x^{\half}\pxk}\right)\mtx{P}_{\mtx{\Sigma}_x^{\half}\pxhk}\right\|^2\\
& = \max_{\vct{g}\in\mathbb{R}^k}\min_{\mtx{Q} \in \mathbb{R}^{k \times k}} \E \left[ \left(\left(\vct{u}^\top - \widehat{\vct{u}}^\top \mtx{Q}\right)\vct{g}\right)^2\right] \nonumber
\\
& := \lossmax(\mtx{\Phi}_{1:k}, \widehat{\mtx{\Phi}}_{1:k}). \nonumber
\end{align}
%

	Here $\pxk = [\vct{\phi}_1, \ldots, \vct{\phi}_k]$ is a $p_1 \times k$ matrix consisting of the leading $k$ population canonical loadings for $\vct{x}$, and $\pxhk$ is its estimate based on a given sample. Moreover $\vct{u}^\top:=(U_1, \ldots, U_k)$ and $\hat{\vct{u}}^\top:=(\widehat{U}_1, \ldots, \widehat{U}_k)$.

\end{theorem}


\subsection{Uniform upper bounds and minimax rates}
The most important contribution of this paper is to establish sharp upper bounds for the estimation/prediction  of CCA based on the  proposed subspace losses $\lossmax(\mtx{\Phi}_{1:k}, \widehat{\mtx{\Phi}}_{1:k})$ and $\lossave(\mtx{\Phi}_{1:k}, \widehat{\mtx{\Phi}}_{1:k})$.
It is noteworthy that both upper bounds hold uniformly for all invertible $\mtx{\Sigma}_x, \mtx{\Sigma}_y$ provided $n > C(p_1 +p_2)$ for some numerical constant $C$. Furthermore, in order to justify the sharpness of these bounds, we also establish minimax lower bounds under a family of stringent and localized parameter spaces. These results will be detailed in \prettyref{sec:theory}.

\subsection{Notations and the Organization}
Throughout the paper, we use lower-case and upper-case non-bolded letters to represent fixed and random variables, respectively. We also use lower-case and upper-case bold letters to represent vectors (which could be either deterministic or random) and matrices, respectively. For any matrix $\bU\in\mathbb{R}^{n\times p}$ and vector $\bu\in\mathbb{R}^p$, $\|\bU\|, \|\bU\|_F$ denotes operator (spectral) norm and Frobenius norm respectively, $\|\bu\|$ denotes the vector $l_2$ norm, $\bU_{1:k}$ denotes the submatrix consisting of the first $k$ columns of $\bU$, and $\bP_{\bU}$ stands for the projection matrix onto the column space of $\bU$. Moreover, we use $\sigma_{\max}(\bU)$ and $\sigma_{\min}(\bU)$ to represent the largest and smallest singular value of $\bU$ respectively, and $\kappa(\bU)=\sigma_{\max}(\bU)/\sigma_{\min}(\bU)$ to denote the condition number of the matrix. We use $\bI_{p}$ for the identity matrix of dimension $p$ and $\bI_{p, k}$ for the submatrix composed of the first $k$ columns of $\bI_p$. Further, $\mathcal{O}(m, n)$ (and simply $\mathcal{O}(n)$ when $m=n$) stands for the set of $m\times n$ matrices with orthonormal columns and $\mathbb{S}_+^p$ denotes  the set of $p\times p$ strictly positive definite matrices. For a random vector $\bx\in\mathbb{R}^p$, $\lspan(\bx^\top)=\{\bx^\top\bw, \bw\in\mathbb{R}^p\}$ denotes the subspace of all the linear combinations of $\bx$. Other notations will be specified within the corresponding context. 

In the following, we will introduce our main upper and lower bound results in \prettyref{sec:theory}. To highlight our contributions in the new loss functions and theoretical results, we will compare our results to existing work in the literature in \prettyref{sec:contributions}. All proofs are deferred to 
\prettyref{sec:proofs}.

\strangesection{2}{Theory}
\label{section-theory}
\label{sec:theory}
In this section, we introduce our main results on non-asymptotic upper and lower bounds for estimating CCA under the proposed loss functions. It is worth recalling that $\lambda_1, \ldots, \lambda_{p_1 \wedge p_2}$ are singular values of $\sx^{-\half}\sxy\sy^{-\half}$.

It is natural to estimate population CCA by its sample counterparts. Similar to equation~\eqref{eq-def-cca}, the \textbf{sample canonical loadings} are defined recursively by 
\begin{equation}
\begin{aligned}
(\wh{\bphi}_i, \wh{\bpsi}_i)= &\argmax  &&   \bphi^\top \widehat{\mtx{\Sigma}}_{xy} \bpsi
\\
~&\text{subject to} &&\bphi^\top \widehat{\mtx{\Sigma}}_x \bphi=1, ~\bpsi^\top \widehat{\mtx{\Sigma}}_y \bpsi=1;
\\
~&~&& \bphi^\top\widehat{\mtx{\Sigma}}_x  \bphi_j=0, ~\bpsi^\top \widehat{\mtx{\Sigma}}_y \bpsi_j=0,~ \forall~1\leq j\leq i-1.
\end{aligned}
\label{eq:sample_cca}
\end{equation}
where $\sxh, \syh, \sxyh$ are the sample covariance matrices. The \textbf{sample canonical variables} are defined as the following linear combinations by the sample canonical loadings:
\[
\widehat{U}_i = \widehat{\vct{\phi}}_i^\top \vct{x}, \quad \widehat{V}_i = \widehat{\vct{\psi}}_i^\top \vct{y}, \quad i = 1 \ldots, p_1 \wedge p_2.
\]
We prove the following upper bound for the estimate based on sample CCA.

\begin{theorem}(Upper bound)
\label{thm:upper}
Suppose $\begin{bmatrix} \vct{x}\\ \vct{y} \end{bmatrix} \sim \mathcal{N}(\vct{0}, \mtx{\Sigma})$ where $\mtx{\Sigma}$ is defined as in \prettyref{eq:covariance}. Assume $\mtx{\Sigma}_x$ and $\mtx{\Sigma}_y$ are invertible. Moreover, assume $\lambda_k > \lambda_{k+1}$ for some predetermined $k$. Then there exist universal positive constants $\gamma, C, C_0$ such that if $n\geq C(p_1+p_2)$, the top-$k$ sample canonical coefficients matrix $\pxhk$ satisfies
\begin{gather*}
\E\left[ \lossmax(\mtx{\Phi}_{1:k}, \widehat{\mtx{\Phi}}_{1:k})\right] \leq C_0 \left[\frac{(1-\lambda_k^2)(1-\lambda_{k+1}^2)}{(\lambda_k - \lambda_{k+1})^2}\frac{p_1}{n} + \frac{(p_1+p_2)^2}{n^2(\lambda_k - \lambda_{k+1})^4} + e^{-\gamma (p_1 \wedge p_2)}\right]\\
\E\left[ \lossave(\mtx{\Phi}_{1:k}, \widehat{\mtx{\Phi}}_{1:k})\right] \leq C_0 \left[\frac{(1-\lambda_k^2)(1-\lambda_{k+1}^2)}{(\lambda_k - \lambda_{k+1})^2}\frac{p_1-k}{n} + \frac{(p_1+p_2)^2}{n^2(\lambda_k - \lambda_{k+1})^4} + e^{-\gamma (p_1 \wedge p_2)}\right]
\end{gather*}
The upper bounds for $\pyhk$ can be obtained by switching $p_1$ and $p_2$. 
\end{theorem}

Since we pursue a nonasymptotic theoretical framework for CCA estimates, and the loss functions we propose are nonstandard in the literature, the standard minimax lower bound results in parametric maximum likelihood estimates do not apply straightforwardly. Instead, we turn to the nonparametric minimax lower bound frameworks, particularly those in PCA and CCA; See, e.g., \cite{vu2013minimax, cai2013sparse, gao2015minimax}. Compared to these existing works, the technical novelties of our results and proofs are summarized in Sections \ref{sec:lower_bound} and \ref{sec:lower-proof}.


We define the parameter space $\mathcal{F}(p_1, p_2, k, \lambda_{k}, \lambda_{k+1}, \kappa_1, \kappa_2)$ as the collection of joint covariance matrices $\s$ satisfying 
%
\begin{enumerate}
\item $\kappa(\sx)=\kappa_1$ and $\kappa(\sy)=\kappa_2$;
\item $0\leq \lambda_{p_1\wedge p_2}\leq \cdots\leq \lambda_{k+1}<\lambda_k\leq \cdots\leq \lambda_1\leq 1$.
\end{enumerate}
We deliberately set $\kappa(\sx)=\kappa_1, \kappa(\sy)=\kappa_2$ to demonstrate that the lower bound is independent of the condition number. For the rest of the paper, we will use the shorthand $\mathcal{F}$ to represent this parameter space for simplicity. 
\begin{theorem}(Lower bound)
\label{thm:lower}
There exists a universal constant $c$ independent of $n, p_1, p_2$ and $\s$ such that
\begin{gather*}
\inf\limits_{\pxhk}\sup_{\s\in  \mathcal{F}}  \E\left[ \lossmax(\mtx{\Phi}_{1:k}, \widehat{\mtx{\Phi}}_{1:k})\right] \geq c^2\left\{\left(\frac{(1-\lambda_{k}^2)(1-\lambda_{k+1}^2)}{(\lambda_k - \lambda_{k+1})^2}\frac{p_1-k}{n}\right)\wedge 1\wedge \frac{p_1-k}{k}\right\}
\\
\inf\limits_{\pxhk}\sup_{\s\in  \mathcal{F}}  \E\left[ \lossave(\mtx{\Phi}_{1:k}, \widehat{\mtx{\Phi}}_{1:k})\right]\geq c^2\left\{\left(\frac{(1-\lambda_{k}^2)(1-\lambda_{k+1}^2)}{(\lambda_k - \lambda_{k+1})^2}\frac{p_1-k}{n}\right)\wedge 1\wedge \frac{p_1-k}{k}\right\}.
\end{gather*}
The lower bounds for $\pyhk$ can be obtained by replacing $p_1$ with $p_2$. 
\end{theorem}




\begin{corollary}
\label{thm::samplesize}
When $p_1, p_2\geq (2k) \vee C(\log n)$ and
\begin{equation}
n \geq C\frac{(p_1+p_2)(1+p_2/p_1)}{(\lambda_k - \lambda_{k+1})^2(1-\lambda_{k}^2)(1-\lambda_{k+1}^2)}
\label{eq:sample-size}
\end{equation}
for some universal positive constant $c$, the minimax rates can be characterized by 
\begin{gather*}
\inf\limits_{\pxhk}\sup_{\s\in  \mathcal{F}}  \E\left[ \lossmax(\mtx{\Phi}_{1:k}, \widehat{\mtx{\Phi}}_{1:k}) \right] \asymp \frac{(1-\lambda_{k}^2)(1-\lambda_{k+1}^2)}{(\lambda_k - \lambda_{k+1})^2}\frac{p_1}{n},\\
\inf\limits_{\pxhk}\sup_{\s\in  \mathcal{F}}  \E\left[ \lossave(\mtx{\Phi}_{1:k}, \widehat{\mtx{\Phi}}_{1:k}) \right] \asymp \frac{(1-\lambda_{k}^2)(1-\lambda_{k+1}^2)}{(\lambda_k - \lambda_{k+1})^2}\frac{p_1}{n}.
\end{gather*}
\end{corollary}

\strangesection{3}{Related Work and Our Contributions}
\label{sec:contributions}
\label{sec:contributions}
Recently, the non-asymptotic rate of convergence of CCA has been studied by \cite{gao2015minimax, gao2014sparse} under a sparse setup and by \cite{cai2016rate} under the usual non-sparse setup. \cite{cai2016rate} appeared on arXiv almost at the same time as the first version of our paper was posted. In this section, we state our contributions by detailed comparison with these works. 
\subsection{Novel loss funcitons}
\label{sec:loss_comparison}
We proposed new loss functions based on the principal angles between the subspace spanned by the population canonical variates and the subspace spanned by the estimated canonical variates. 
In contrast, \cite{gao2014sparse} proposed and studied the loss $\lossgao$; \cite{cai2016rate} proposed $\losscai$ and studied both $\lossgao$ and $\losscai$, where 
\begin{equation*}
\begin{aligned}
 \lossgao(\mtx{\Phi}_{1:k}, \widehat{\mtx{\Phi}}_{1:k}) & = \min_{\mtx{Q} \in \mathcal{O}(k, k)} \E \left[\| \vct{x}^\top \mtx{\Phi}_{1:k} - \vct{x}^\top \widehat{\mtx{\Phi}}_{1:k} \mtx{Q}\|_2^2~\Big\vert~\widehat{\mtx{\Phi}}_{1:k}\right], \\
 \losscai(\mtx{\Phi}_{1:k}, \widehat{\mtx{\Phi}}_{1:k}) & = \max_{\vct{g}\in\mathbb{R}^k, |\vct{g}| = 1}\min_{\mtx{Q} \in \mathcal{O}(k, k)}\E \left[  \left( \left(\vct{x}^\top \mtx{\Phi}_{1:k} - \vct{x}^\top \widehat{\mtx{\Phi}}_{1:k} \mtx{Q}\right)\vct{g} \right)^2~\Big\vert~\widehat{\mtx{\Phi}}_{1:k}\right].
\end{aligned}
\end{equation*}
$\lossgao$ and $\losscai$ resemble our loss functions $\lossave$ and $\lossmax$ respectively. By \prettyref{thm:formula}, we also have
\begin{equation*}
\begin{aligned}
 \lossave(\mtx{\Phi}_{1:k}, \widehat{\mtx{\Phi}}_{1:k}) &= 2\min_{\mtx{Q} \in \mathbb{R}^{k \times k}} \E \left[\| \vct{x}^\top \mtx{\Phi}_{1:k} - \vct{x}^\top \widehat{\mtx{\Phi}}_{1:k} \mtx{Q}\|_2^2 ~\Big\vert~ \widehat{\mtx{\Phi}}_{1:k}\right] \\
 \lossmax(\mtx{\Phi}_{1:k}, \widehat{\mtx{\Phi}}_{1:k}) &= \max_{\vct{g}\in\mathbb{R}^k, |\vct{g}| = 1}\min_{\mtx{Q} \in \mathbb{R}^{k \times k}} \E \left[  \left( \left(\vct{x}^\top \mtx{\Phi}_{1:k} - \vct{x}^\top \widehat{\mtx{\Phi}}_{1:k} \mtx{Q}\right)\vct{g} \right)^2~\Big\vert~\widehat{\mtx{\Phi}}_{1:k}\right]
\end{aligned}
\end{equation*}
By these two expressions, we can easily obtain
\begin{equation}
\label{eq:connection}
\begin{aligned}
	\lossave(\mtx{\Phi}_{1:k}, \widehat{\mtx{\Phi}}_{1:k}) &\leq 2\lossgao(\mtx{\Phi}_{1:k}, \widehat{\mtx{\Phi}}_{1:k}) \\
\lossmax(\mtx{\Phi}_{1:k}, \widehat{\mtx{\Phi}}_{1:k}) & \leq \losscai(\mtx{\Phi}_{1:k}, \widehat{\mtx{\Phi}}_{1:k})
\end{aligned}
\end{equation}
However, $\lossave(\mtx{\Phi}_{1:k}, \widehat{\mtx{\Phi}}_{1:k})$ and $\lossgao(\mtx{\Phi}_{1:k}, \widehat{\mtx{\Phi}}_{1:k})$ are not equivalent up to a constant. Neither are $\lossmax(\mtx{\Phi}_{1:k}, \widehat{\mtx{\Phi}}_{1:k})$ and $\losscai(\mtx{\Phi}_{1:k}, \widehat{\mtx{\Phi}}_{1:k})$. In fact, we can prove that as long as $n > \max(p_1, p_2)$, if $\lambda_k=1>\lambda_{k+1}$, then
\[
\lossave(\mtx{\Phi}_{1:k}, \widehat{\mtx{\Phi}}_{1:k}) = \lossmax(\mtx{\Phi}_{1:k}, \widehat{\mtx{\Phi}}_{1:k}) = 0,
\]
while almost surely $\lossgao(\mtx{\Phi}_{1:k}, \widehat{\mtx{\Phi}}_{1:k}) \neq 0$ and $\losscai(\mtx{\Phi}_{1:k}, \widehat{\mtx{\Phi}}_{1:k}) \neq 0$.

To illustrate this comparison, we can consider the following very simple simulation: Suppose $p_1 = p_2 = 2$, $n=3$ and $\mtx{\Sigma}_x = \begin{bmatrix} 1 & 0\\ 0 & 1\end{bmatrix}$ and $\mtx{\Sigma}_y = \begin{bmatrix} 1 & 0 \\ 0 & 1\end{bmatrix}$ and $\mtx{\Sigma}_{xy} = \begin{bmatrix} 1 & 0\\ 0 & 0.5\end{bmatrix}$. In this setup, we know the population canonical correlation coefficients are $\lambda_1 = 1$ and $\lambda_2 = 0.5$, and the leading canonical loadings are $\vct{\phi}_1 = \begin{bmatrix} 1 \\ 0 \end{bmatrix}$ and $\vct{\psi}_1 = \begin{bmatrix} 1 \\ 0 \end{bmatrix}$. In our simulation, we generated the following data matrices
\[
\mtx{X}= \begin{bmatrix} 0.0736 & 1.5496 \\ 1.5390 & -0.0415 \\ 0.9331 & -0.4776\end{bmatrix}
\]
and
\[
\mtx{Y}= \begin{bmatrix} 0.0736 & 2.8982 \\ 1.5390 & -1.2214 \\ 0.9331 & 2.5931\end{bmatrix}.
\]
Furthermore, we can obtain the sample canonical correlations $\widehat{\lambda}_1 = 1$ and $\widehat{\lambda}_2 = 0.5210$, as well as the leading sample canonical loadings $\widehat{\vct{\phi}}_1 = \begin{bmatrix} -0.9616 \\ 0 \end{bmatrix}$ and $\widehat{\vct{\psi}}_1 = \begin{bmatrix} -0.9616 \\ 0 \end{bmatrix}$. Then $\lossave(\vct{\phi}_1, \widehat{\vct{\phi}}_1) = \lossmax(\vct{\phi}_1, \widehat{\vct{\phi}}_1) =  0$ while $\lossgao(\vct{\phi}_1, \widehat{\vct{\phi}}_1)\neq 0, \losscai(\vct{\phi}_1, \widehat{\vct{\phi}}_1) \neq 0$. 

This numerical example clearly shows that the sample CCA can exactly identify that among all linear combinations of $X_1$ and $X_2$ and all linear combinations of $Y_1$ and $Y_2$, $a X_1$ and $b Y_1$ are mostly correlated. Our loss functions $\lossave$ and $\lossmax$ do characterize this exact identification, whereas $\lossgao$ and $\losscai$ do not.

Moreover, the following joint loss was studied in \cite{gao2015minimax}:
\[
\lossjoint\left(\left(\mtx{\Phi}_{1:k}, \mtx{\Psi}_{1:k}\right), \left(\widehat{\mtx{\Phi}}_{1:k}, \widehat{\mtx{\Psi}}_{1:k}\right)\right) = \E \left[ \left\| \pxhk\pyhk^\top - \pxk\pyk^\top\right\|_{\normf}^2 \right]. 
\]
Similarly, $\lossjoint\left(\left(\mtx{\Phi}_{1:k}, \mtx{\Psi}_{1:k}\right), \left(\widehat{\mtx{\Phi}}_{1:k}, \widehat{\mtx{\Psi}}_{1:k}\right)\right)\neq 0$ almost surely under the special case $\lambda_k = 1 > \lambda_{k+1}$. 
\subsection{Sharper upper bounds}
\label{sec:upper_bound}
Regardless of loss functions, we explain in the following why \prettyref{thm:upper} implies sharper upper bounds than the existing rates in \cite{gao2015minimax}, \cite{gao2014sparse} and \cite{cai2016rate} under the nonsparse case. Our discussion is focused on $\lossave$ in the following discussion while the discussion for $\lossmax$ is similar.

Notice that if we only apply Wedin's sin-theta law, i.e., replacing the fine bound \prettyref{lem:operator_bound} with the rough bound \prettyref{lem:estimation} (also see \cite{gao2015minimax} for similar ideas), we can obtain the following rough bound:
\begin{equation}
\label{eq:rough_bound}
\E\left[ \lossave(\mtx{\Phi}_{1:k}, \widehat{\mtx{\Phi}}_{1:k})\right] \leq C_0 \left[\frac{p_1 + p_2}{n(\lambda_k - \lambda_{k+1})^2}\right].
\end{equation}

In order to decouple the estimation error bound of $\widehat{\mtx{\Phi}}_{1:k}$ from $p_2$, both \cite{gao2014sparse} and \cite{cai2016rate} assume the residual canonical correlations are zero, i.e., 
\[
\lambda_{k+1} = \ldots = \lambda_{p_1 \wedge p_2} =0.
\]
This assumption is essential for proofs in both \cite{gao2014sparse} and \cite{cai2016rate} under certain sample size conditions. 
We got rid of this assumption by developing new proof techniques and these techniques actually work for $\lossgao, \losscai$ as well. 
A detailed comparison between our result and that in \cite{cai2016rate} is summarized in Table~\prettyref{table: comparison} (The results of \cite{gao2014sparse} in the non-sparse regime can be implied by \cite{cai2016rate} under milder sample size conditions).

\begin{table}[htb]
\begin{center}
\begin{tabular}{|c|c|c|c|}

\hline
~  & Cai and Zhang 2016 &  Our work\\
\hline
 Loss function	&  $\lossgao (\geq \lossave)$ & $\lossave$  \\  
 \hline
 Sample size	&  $n>C \left(\frac{p_1+\sqrt{p_1p_2} }{\lambda_k^2} + \frac{p_2}{\lambda_k^{4/3}}\right)$ & $n>C(p_1 + p_2)$  \\  
 \hline
 $\lambda_{k+1} = \cdots = \lambda_{p_1} = 0$ 	&  Yes & No  
 \\ \hline
Upper Bound Rates	&  $\frac{p_1}{n\lambda_k^2} + \frac{p_1p_2}{n^2\lambda_k^4}$ & $\frac{(1-\lambda_k^2)(1 - \lambda_{k+1}^2)}{(\lambda_k - \lambda_{k+1})^2}\frac{p_1 - k}{n} + \frac{(p_1 + p_2)^2}{n^2(\lambda_k - \lambda_{k+1})^4} + e^{-\gamma (p_1 \wedge p_2)}$ \\
\hline
\end{tabular}
\end{center}
\label{table: comparison}
\end{table}

Perhaps the most striking contribution of our upper bound is that we first derive the factors $(1-\lambda_k^2)$ and $(1-\lambda_{k+1}^2)$ in the literature of nonasymptotic CCA estimate. We now explain why these factors are essential when leading canonical correlation coefficients are close to $1$.

\subsubsection*{Example 1: $\lambda_{k}=1$ and $\lambda_{k+1}=0$} 
Consider the example that $k=1$, $p_1=p_2:=p \gg \log n$, $\lambda_{1} = 1$ and $\lambda_{2} = 0$. Then our bound rates $\frac{(1-\lambda_k^2)(1 - \lambda_{k+1}^2)}{(\lambda_k - \lambda_{k+1})^2}\frac{p_1 - k}{n} + \frac{(p_1 + p_2)^2}{n^2(\lambda_k - \lambda_{k+1})^4} + e^{-\gamma (p_1 \wedge p_2)}$
actually imply that
\[
\E \lossave(\vct{\phi}_1, \widehat{\vct{\phi}}_1)  \leq C\frac{p^2}{n^2},
\]
while the rates in \cite{gao2014sparse} and \cite{cai2016rate} imply that
\[
\E \lossave(\vct{\phi}_1, \widehat{\vct{\phi}}_1) \leq 2\E \lossgao(\vct{\phi}_1, \widehat{\vct{\phi}}_1) \leq C\frac{p}{n}.
\]
This shows that even under the condition $\lambda_{k+1} = 0$, under our loss $\lossave(\vct{\phi}_1, \widehat{\vct{\phi}}_1)$, our result could imply sharper convergence rates than that in \cite{gao2014sparse} and \cite{cai2016rate} if $\lambda_k =1$. 

Notice that as aforementioned, when $\lambda_k=1$, we can actually prove $\E \lossave(\vct{\phi}_1, \widehat{\vct{\phi}}_1) = 0$ through a separate argument. How to improve \prettyref{thm:upper} to imply this result is an open problem for future research.

\subsubsection*{Example 2: Both $\lambda_{k}$ and $\lambda_{k+1}$ are close to $1$} 
Consider the example that $k=1$, $p_1=p_2:=p \gg \log n$, $\lambda_{1} = 1 - \sqrt[\leftroot{-3}\uproot{3}4]{\frac{p}{n}}$ and $\lambda_{2} = 1 - 2\sqrt[\leftroot{-3}\uproot{3}4]{\frac{p}{n}}$. Then our bound rates $\frac{(1-\lambda_k^2)(1 - \lambda_{k+1}^2)}{(\lambda_k - \lambda_{k+1})^2}\frac{p_1 - k}{n} + \frac{(p_1 + p_2)^2}{n^2(\lambda_k - \lambda_{k+1})^4} + e^{-\gamma (p_1 \wedge p_2)}$
actually imply that
\[
\E \lossave(\vct{\phi}_1, \widehat{\vct{\phi}}_1)  \leq C\frac{p}{n},
\]
while the rough rates \prettyref{eq:rough_bound} by Wedin's sin-theta law implies
\[
\E \lossave(\vct{\phi}_1, \widehat{\vct{\phi}}_1) \leq C\sqrt{\frac{p}{n}}.
\]
This shows that our upper bound rates could be much sharper than the rough rates \prettyref{eq:rough_bound} when both $\lambda_k$ and $\lambda_{k+1}$ are close to $1$.

\subsubsection*{New proof techniques and connection to asymptotic theory} 
To the best of our knowledge, none of the analysis in \cite{gao2015minimax}, \cite{gao2014sparse}, \cite{cai2016rate} can be used to obtain the multiplicative factor $(1-\lambda_k^2)(1-\lambda_{k+1}^2)/(\lambda_k - \lambda_{k+1})^2$ in the first order term of the upper bound, even under the strong condition that $\lambda_{k+1} = \cdots =\lambda_{p_1\wedge p_2} = 0$.

Following a different path, we do careful non-asymptotic entry-wise perturbation analysis of the estimating equations of CCA to avoid the loss of precision caused by applying matrix inequalities in the early stage of the proof. The main challenge is to analyze the properties of matrix hardmard products, especially to derive tight operator norm bounds for certain hardmard products. We are particularly luckily to find a divide-and-conquer approach ($\lambda_k \geq \frac{1}{2}$ and $\lambda_k < \frac{1}{2}$ in the proof of \prettyref{lem:operator_bound}) to decompose the target matrices into simple-structure matrices where we can apply the tools developed in Lemma~\ref{lem:hada-norm}.

The asymptotic distribution of the canonical loadings $\{(\widehat{\vct{\phi}}_i, \widehat{\vct{\psi}}_i)\}_{i=1}^{p_1\wedge p_2}$ has been studied by \cite{anderson1999asymptotic} under the assumption that all the canonical correlations are distinct and $\lambda_1 \neq 1$. Since we focus on subspaces, we only require $\lambda_k > \lambda_{k+1}$ for the given $k$. Both \cite{anderson1999asymptotic} and our work are based on analyzing the estimating equations (\prettyref{eq:estimate}) of CCA. Our analysis is more involved because completely novel techniques are required to obtain the factor $(1-\lambda_k^2)(1-\lambda_{k+1}^2)$ in the nonasymptotic framework.

\subsection{Sharper lower bounds under parameter spaces with fixed $\lambda_k$ and $\lambda_{k+1}$}
\label{sec:lower_bound}
The minimax lower bounds for the estimation rates of CCA were first established by \cite{gao2015minimax,gao2014sparse} under the losses $\lossjoint$ and $\lossgao$. However, the parameter space discussed in \cite{gao2014sparse} requires $\lambda_{k+1} = 0$. Moreover, the parameter space in \cite{gao2015minimax} is parameterized by $\lambda$ satisfying $\lambda_{k} \geq \lambda$, but $\lambda_{k+1}$ is not specified. In fact, they also constructed the hypothesis class with $\lambda_{k+1}=0$ and the resulting minimax lower bound is proportional to $\frac{1}{\lambda^2}$.

However, this minimax lower bound is not sharp when $\lambda_{k}$ and $\lambda_{k+1}$ are close. Suppose $p_1 = p_2 := p$, $k=1$, $\lambda_1 = \frac{1}{2}$ and $\lambda_2 = \frac{1}{2} - \sqrt{\frac{p}{n}}$. Our minimax lower bound in \prettyref{thm:lower} leads to 
\[
\inf\limits_{\pxhk}\sup_{\s\in  \mathcal{F}}  \E\left[ \lossave(\mtx{\Phi}_{1:k}, \widehat{\mtx{\Phi}}_{1:k})\right]\geq O(1).
\]
In contrast, to capture the fundamental limit of CCA estimates in this scenario under the framework of \cite{gao2015minimax}, one needs to choose $\lambda$ to capture both $\lambda_k$ and $\lambda_{k+1}$, i.e., $\lambda_k \leq \lambda \leq \lambda_{k+1}$ and hence $\lambda \approx 1/2$. Then the resulting minimax lower bound rate will be $\frac{p}{n \lambda^2} = O(\frac{p}{n})$, which is much looser than $O(1)$.

Technically speaking, we follow the analytical framework of \cite{gao2015minimax} and \cite{gao2014sparse}, but the hypothesis classes  construction requires any given $\lambda_{k+1}>0$ instead of $\lambda_{k+1} = 0$, and this brings in new technical challenges. More detailed technical discussions are deferred to \prettyref{sec:lower-proof}.

\strangesection{4}{Proof of \prettyref{thm:formula}}
\label{sec:equivalence-proof}

\label{sec:proofs}

Suppose the observed sample of $(\vct{x}, \vct{y})$ is fixed and consider the correlation between the two subspaces of $\mc{H}$ (defined in \prettyref{eq:hilbert}): $\lspan(U_1, \ldots, U_k)$ and $\lspan(\widehat{U}_1, \ldots, \widehat{U}_k)$. Let $(W_1, \widehat{W}_1), (W_2, \widehat{W}_2), \ldots, (W_k, \widehat{W}_k)$ be the first, second, ..., and  $k$th pair of canonical variates between $U_1, \ldots, U_k$ and $\widehat{U}_1, \ldots, \widehat{U}_k$. Then $\lspan(W_1, \ldots, W_k)=\lspan(U_1, \ldots, U_k)$, $\lspan(\widehat{W}_1, \ldots, \widehat{W}_k)=\lspan(\widehat{U}_1, \ldots, \widehat{U}_k)$ and $\langle W_i, W_j\rangle=\langle W_i, \widehat{W}_j\rangle=\langle \widehat{W}_i, \widehat{W}_j \rangle=0$, for any $i \neq j$ and $\text{Var}(W_i)=\text{Var}(\widehat{W}_i)=1$, for $i=1, \ldots, k$.\\

By the definition of principal angles, we know $\angle(W_i, \widehat{W}_i)$ is actually the $i$th principal angle between $\lspan(U_1, \ldots, U_k)$ and $\lspan(\widehat{U}_1, \ldots, \widehat{U}_k)$, i.e., $\theta_i := \angle (W_i, \widehat{W}_i)$. This implies that
\[
\mathcal{L}_{ave}(\mtx{\Phi}_{1:k}, \widehat{\mtx{\Phi}}_{1:k}):=\sum_{i=1}^k \sin^2 \theta_i = \sum_{i=1}^k \left(1 - \left|\left\langle W_i, \widehat{W}_i\right\rangle\right|^2\right).
\]
Since $U_1, \ldots, U_k, \widehat{U}_1, \ldots, \widehat{U}_k$ are linear combinations of $X_1, \ldots, X_{p_1}$, we can denote
\[
\vct{w}^\top := (W_1, \ldots, W_k)=\vct{x}^\top\mtx{\Sigma}_x^{-\half}\mtx{B}, \text{~and~} \hat{\vct{w}}^\top:=(\widehat{W}_1, \ldots, \widehat{W}_k)=\vct{x}^\top\mtx{\Sigma}_x^{-\half}\widehat{\mtx{B}},
\]
where $\mtx{B}:=[\vct{b}_1, \ldots, \vct{b}_k],~ \widehat{\mtx{B}}:=[\widehat{\vct{b}}_1, \ldots, \widehat{\vct{b}}_k] \in \mathbb{R}^{p \times k}$.

By the definition of $\vct{w}$, we have 
\[
\mtx{I}_k = \text{Cov}(\vct{w}) = \mtx{B}^\top \mtx{\Sigma}_x^{-1/2} \text{Cov}(\vct{x})\mtx{\Sigma}_x^{-1/2} \mtx{B} = \mtx{B}^\top \mtx{B}
\]
and similarly $\mtx{I}_k = \widehat{\mtx{B}}^\top \widehat{\mtx{B}}$. Then $\mtx{B}, \widehat{\mtx{B}}$ are $p \times k$ basis matrices. Moreover, we have $\vct{b}_i^\top\widehat{\vct{b}}_j=\langle W_i, \widehat{W}_j \rangle=0$, for all $i \neq j$. Moreover, we have
\[
\text{Diag}(\cos(\theta_1), \ldots, \cos(\theta_k)) = \text{Cov}(\vct{w}, \hat{\vct{w}}) = \mtx{B}^\top \mtx{\Sigma}_x^{-1/2} \text{Cov}(\vct{x})\mtx{\Sigma}_x^{-1/2} \widehat{\mtx{B}} = \mtx{B}^\top \widehat{\mtx{B}}.
\]
Notice that $\lspan(U_1, \ldots, U_k)=\lspan(W_1, \ldots, W_k)$, $(U_1, \ldots, U_k)= \vct{x}^\top\mtx{\Phi}_{1:k}$, and $(W_1, \ldots, W_k)=\vct{x}^\top\mtx{\Sigma}_x^{-\half}\mtx{B}$. Then 
\[
\mtx{\Phi}_{1:k} = \mtx{\Sigma}_x^{-\half}\mtx{B}\mtx{C} \Rightarrow \mtx{\Sigma}_x^{\half}\mtx{\Phi}_{1:k} = \mtx{B}\mtx{C}
\]
for some nonsingular $k \times k$ matrix $C$. This implies that $\mtx{B}$ and $\mtx{\Sigma}_x^{\half}\mtx{\Phi}_{1:k}$ have the same column space. Since $\mtx{B} \in \mathbb{R}^{p \times k}$ is a basis matrix, we have 
\[
\mtx{B}\mtx{B}^\top = \mtx{P}_{\mtx{\Sigma}_x^{\half}\mtx{\Phi}_{1:k}}.
\]
Similarly, we have 
\[
\widehat{\mtx{B}}\widehat{\mtx{B}}^\top = \mtx{P}_{\mtx{\Sigma}_x^{\half}\widehat{\mtx{\Phi}}_{1:k}}.
\] 
Straightforward calculation gives
\begin{align*}
\left\|\mtx{B}\mtx{B}^\top - \widehat{\mtx{B}}\widehat{\mtx{B}}^\top\right\|_F^2 &= \trace \left(\mtx{B}\mtx{B}^\top\mtx{B}\mtx{B}^\top - \mtx{B}\mtx{B}^\top\widehat{\mtx{B}}\widehat{\mtx{B}}^\top - \widehat{\mtx{B}}\widehat{\mtx{B}}^\top\mtx{B}\mtx{B}^\top + \widehat{\mtx{B}}\widehat{\mtx{B}}^\top\widehat{\mtx{B}}\widehat{\mtx{B}}^\top\right)
\\
&=2k - 2\trace(\mtx{B}^\top\widehat{\mtx{B}}\widehat{\mtx{B}}^\top\mtx{B})
\\
&=2k - 2\trace(\text{Diag}(\cos^2 (\theta_1), \ldots, \cos^2 (\theta_k)))
\\
& = 2 (\sin^2 (\theta_1)+ \ldots+ \sin^2 (\theta_k)) = 2k \mathcal{L}_{ave}(\mtx{\Phi}_{1:k}, \widehat{\mtx{\Phi}}_{1:k}) 
\end{align*}
and
\begin{align*}
\left\| \left(\mtx{I}_{p_1} - \mtx{B}\mtx{B}^\top\right)\widehat{\mtx{B}}\widehat{\mtx{B}}^\top\right\|_F^2 &= \trace \left(\left(\mtx{I}_{p_1} - \mtx{B}\mtx{B}^\top\right)\widehat{\mtx{B}}\widehat{\mtx{B}}^\top \widehat{\mtx{B}}\widehat{\mtx{B}}^\top\left(\mtx{I}_{p_1} - \mtx{B}\mtx{B}^\top\right) \right)
\\
&=k - \trace(\mtx{B}^\top\widehat{\mtx{B}}\widehat{\mtx{B}}^\top\mtx{B})
\\
&= k \mathcal{L}_{ave}(\mtx{\Phi}_{1:k}, \widehat{\mtx{\Phi}}_{1:k}). 
\end{align*}
The above equalities yield the first two equalities in \prettyref{eq:formula1}.

Notice that both $U_1, \ldots, U_k$ and $W_1, \ldots W_k$ are both orthonormal bases of $\lspan(U_1, \ldots, U_k)$. (Similarly, $\widehat{U}_1, \ldots, \widehat{U}_k$ and $\widehat{W}_1, \ldots \widehat{W}_k$ are both orthonormal bases of $\lspan(\widehat{U}_1, \ldots, \widehat{U}_k))$.) Then we have $\vct{u}^\top = \vct{w}^\top \mtx{R}$ where $\mtx{R}$ is a $k \times k$ orthogonal matrix. Then 
\begin{align*}
\min_{\mtx{Q} \in \mathbb{R}^{k \times k}} \E \|\vct{u}^\top - \hat{\vct{u}}^\top \mtx{Q}\|_2^2 &= \min_{\mtx{Q} \in \mathbb{R}^{k \times k}} \E \|\vct{u}^\top - \hat{\vct{w}}^\top \mtx{Q}\|_2^2 = \min_{\mtx{Q} \in \mathbb{R}^{k \times k}} \E \|\vct{w}^\top \mtx{R}- \hat{\vct{w}}^\top \mtx{Q}\|_2^2
\\
&=\min_{\mtx{Q} \in \mathbb{R}^{k \times k}} \E \|\vct{w}^\top - \hat{\vct{w}}^\top \mtx{Q} \mtx{R}^\top\|_2^2 = \min_{\mtx{Q} \in \mathbb{R}^{k \times k}} \E \|\vct{w}^\top - \hat{\vct{w}}^\top \mtx{Q}\|_2^2
\\
&=\min_{\vct{q}_i \in \mathbb{R}^k,~i=1, \ldots, k} \E \sum_{i=1}^k (W_i - \hat{\vct{w}}^\top \vct{q}_i)^2 
\\
&= \min_{\vct{q}_i \in \mathbb{R}^k,~i=1, \ldots, k} \sum_{i=1}^k \E (W_i - \hat{\vct{w}}^\top \vct{q}_i)^2
\\
&=\sum_{i=1}^k\min_{\vct{q}_i \in \mathbb{R}^k}  \E  (W_i - \hat{\vct{w}}^\top \vct{q}_i)^2
\end{align*}
Notice that $\min_{\vct{q}_i \in \mathbb{R}^k}  \E  (W_i - \hat{\vct{w}}^\top \vct{q}_i)^2$ is obtained by the best linear predictor, so 
\begin{align*}
\min_{\vct{q}_i \in \mathbb{R}^k}  \E  (W_i - \hat{\vct{w}}^\top \vct{q}_i)^2 &= \Var(W_i) - \Cov(\hat{\vct{w}}, W_i)^\top \Cov^{-1}(\hat{\vct{w}}) \Cov(\hat{\vct{w}}, W_i)
\\
&= 1 - \cos^2 \theta_i = \sin^2 \theta_i.
\end{align*}
Therefore,
\[
\min_{\mtx{Q} \in \mathbb{R}^{k \times k}} \E \|\vct{u}^\top - \hat{\vct{u}}^\top \mtx{Q}\|_2^2 = \sum_{i=1}^k \sin^2 \theta_i = k\mathcal{L}_{ave}(\mtx{\Phi}_{1:k}, \widehat{\mtx{\Phi}}_{1:k}), 
\]
which implies the third equality in \prettyref{eq:formula1}. Similarly, \\
\begin{align*}
&\max_{\vct{g}\in\mathbb{R}^k, \|\vct{g}\|=1} \min_{\mtx{Q} \in \mathbb{R}^{k \times k}} \E \left( \left(\vct{u}^\top - \hat{\vct{u}}^\top \mtx{Q}\right)\vct{g} \right)^2 
\\
& = \max_{\vct{g}\in\mathbb{R}^k, \|\vct{g}\|=1} \min_{\mtx{Q} \in \mathbb{R}^{k \times k}}  \E \left( \left(\vct{u}^\top - \hat{\vct{w}}^\top \mtx{Q}\right)\vct{g} \right)^2 \\
& = \max_{\vct{g}\in\mathbb{R}^k, \|\vct{g}\|=1} \min_{\mtx{Q} \in \mathbb{R}^{k \times k}}  \E \left( \left(\vct{w}^\top \mtx{R} - \hat{\vct{w}}^\top \mtx{Q}\right)\mtx{R}^\top \vct{g} \right)^2 
\\
& = \max_{\vct{g}\in\mathbb{R}^k, \|\vct{g}\|=1} \min_{\mtx{Q} \in \mathbb{R}^{k \times k}}  \E \left( \left(\vct{w}^\top  - \hat{\vct{w}}^\top \mtx{Q}\right)\vct{g} \right)^2 
\\
&=\max_{\vct{g}\in\mathbb{R}^k, \|\vct{g}\|=1} \min_{\vct{q}_i \in \mathbb{R}^k,~i=1, \ldots, k} \E \sum_{i=1}^k g_i^2(W_i - \hat{\vct{w}}^\top \vct{q}_i)^2 
\\
&= \max_{\vct{g}\in\mathbb{R}^k, \|\vct{g}\|=1} \sum_{i=1}^k g_i^2\sin^2\theta_i 
\\
&=\sin^2\theta_1
\end{align*}

Finally, we prove \prettyref{eq:formula2}. By \cite{wedin1983angles}, we have
\begin{align*}
\left\|\bB\bB^\top-\hbB\hbB^\top\right\|^2 &= \left\| \left(\mtx{I}_{p_1} - \mtx{B}\mtx{B}^\top\right)\widehat{\mtx{B}}\widehat{\mtx{B}}^\top\right\|^2=\left\| \left(\mtx{I}_{p_1} - \mtx{B}\mtx{B}^\top\right)\widehat{\mtx{B}}\right\|^2
\\ 
&=\lambda_{\max}\left(\widehat{\mtx{B}}^\top\left(\mtx{I}_{p_1} - \mtx{B}\mtx{B}^\top\right)^\top\left(\mtx{I}_{p_1} - \mtx{B}\mtx{B}^\top\right)\widehat{\mtx{B}}\right)
\\
&=\lambda_{\max}\left(\mtx{I}_k - \text{Diag}(\cos^2 (\theta_1), \ldots, \cos^2 (\theta_k))\right)
\\
&= 1-\cos^2(\theta_1)=\sin^2(\theta_1)=\mathcal{L}_{max}(\mtx{\Phi}_{1:k}, \widehat{\mtx{\Phi}}_{1:k}),
\end{align*}
which implies the the equalities in \prettyref{eq:formula2}.

\strangesection{5}{Proof of Upper Bound}
\label{sec:upper-proof}
Throughout this proof, we denote $\Delta := \lambda_k - \lambda_{k+1}$.
\subsection{Linear Invariance}
\label{section-reduction}
Without loss of generality, we assume $p_2 \geq p_1:=p$. By the definition of canonical variables, we know that $U_1, \ldots, U_p$ and $V_1, \ldots, V_p$ are only determined by $\lspan(X_1, \ldots, X_{p_1})$ and $\lspan(Y_1, \ldots, Y_{p_2})$. In other words, for any invertible $\mtx{C}_1 \in \mathbb{R}^{p_1 \times p_1}$ and $\mtx{C}_2 \in \mathbb{R}^{p_2 \times p_2}$, the canonical pairs of $(X_1, \ldots, X_{p_1})\mtx{C_1}$ and $(Y_1, \ldots, Y_{p_2})\mtx{C_2}$ are still $(U_1, V_1), \ldots, (U_{p_1}, V_{p_1})$. Therefore, we can consider the following orthonormal bases
\[
U_1, \ldots, U_{p_1} \in \lspan(X_1, \ldots, X_{p_1})
\]
and
\[
V_1, \ldots, V_{p_1}, V_{p_1+1}, \ldots, V_{p_2}  \in \lspan(Y_1, \ldots, Y_{p_2}).
\]
Here $(V_1, \ldots, V_{p_1}, V_{p_1+1}, \ldots, V_{p_2})$ is an orthonormal extension of  $V_1, \ldots, V_{p_1}$. Therefore, we know that $(U_1, V_1), \ldots, (U_{p_1}, V_{p_1})$ are also the the canonical pairs between $U_1, \ldots, U_{p_1}$ and $V_1, \ldots, V_{p_2}$.\\

Similarly, for a fixed sample of the variables of $\vct{x}$ and $\vct{y}$, the sample canonical pairs $(\widehat{U}_1, \widehat{V}_1), \ldots, (\widehat{U}_{p_1}, \widehat{V}_{p_1})$ are also sample canonical pairs of the corresponding sample of $(X_1, \ldots, X_{p_1})\mtx{C_1}$ and $(Y_1, \ldots, Y_{p_2})\mtx{C_2}$. This can be easily seen from the concept of sample canonical variables. For example, $\widehat{U}_1$ and $\widehat{V}_1$ are respectively the linear combinations of $X_1, \ldots, X_{p_1}$ and $Y_1, \ldots, Y_{p_1}$, such that their corresponding sample variance are both $1$ and sample correlation is maximized. If we replace $(X_1, \ldots, X_{p_1})$ and $(Y_1, \ldots, Y_{p_1})$ with $(X_1, \ldots, X_{p_1})\mtx{C_1}$ and $(Y_1, \ldots, Y_{p_2})\mtx{C_2}$ respectively and seek for the first sample canonical pair, the constraints (linear combinations of the two sets of variables and unit sample variances) and the objective (sample correlation is maximized) are the same as before, so $(\widehat{U}_1, \widehat{V}_1)$ is still the answer. Similarly, $(\widehat{U}_1, \widehat{V}_1), \ldots, (\widehat{U}_{p_1}, \widehat{V}_{p_1})$ are the sample canonical pairs of $(X_1, \ldots, X_{p_1})\mtx{C_1}$ and $(Y_1, \ldots, Y_{p_2})\mtx{C_2}$. In particular, they are the sample canonical pairs of $U_1, \ldots, U_{p_1}$ and $V_1, \ldots, V_{p_2}$.\\

The above argument gives the following convenient fact: In order to bound 
\[
\mathcal{L}_{ave/max}(\lspan(\widehat{U}_1, \ldots, \widehat{U}_k), \lspan(U_1, \ldots, U_k))
\]
we can replace $X_1, \ldots, X_{p_1}, Y_1, \ldots, Y_{p_2}$ with $U_1, \ldots, U_{p_1}, V_1, \ldots, V_{p_2}$. In other words, we can assume $\vct{x}$ and $\vct{y}$ satisfy the standard form
\[
\sx=\bI_{p_1}, ~\sy=\bI_{p_2}, ~\sxy=[\mtx{\Lambda}, \mtx{0}_{p_1 \times (p_2 - p_1)}]:= \widetilde{\mtx{\Lambda}}
\]
where $\mtx{\Lambda}=\text{Diag}(\lambda_1, \lambda_2, \ldots, \lambda_{p_1})\in\mathbb{R}^{p_1\times p_1}$. Moreover
\[
\px_{1:p_1}=\bI_{p_1}, ~\py_{1:p_1}=\begin{bmatrix} \mtx{I}_{p_1} \\ \mtx{0}_{(p_2 - p_1) \times p_1} \end{bmatrix},
\]
which implies that
\[
\px_{1:k}=\begin{bmatrix} \mtx{I}_k \\ \mtx{0}_{(p_1 - k) \times k} \end{bmatrix}, ~\py_{1:k}=\begin{bmatrix} \mtx{I}_{k} \\ \mtx{0}_{(p_2 - k) \times k} \end{bmatrix}.
\]

\subsection{Upper Bound Under the Standard Form}
\label{sec:analytic-expression}
Under the standard form, by \prettyref{eq:formula1} and \prettyref{eq:formula2}, we have
\begin{equation}
\label{eq:ave_representation}
\mathcal{L}_{ave}(\lspan(\widehat{U}_1, \ldots, \widehat{U}_k), \lspan(U_1, \ldots, U_k)) = \frac{1}{k}\left\|\left(\mtx{I}_{p_1} - \mtx{P}_{\pxk}\right)\mtx{P}_{\pxhk}\right\|_F^2
\end{equation}
and
\begin{equation}
\label{eq:max_representation}
\mathcal{L}_{max}(\lspan(\widehat{U}_1, \ldots, \widehat{U}_k), \lspan(U_1, \ldots, U_k)) = \left\|\left(\mtx{I}_{p_1} - \mtx{P}_{\pxk}\right)\mtx{P}_{\pxhk}\right\|^2.
\end{equation}

Denote $\pxhk=\begin{bmatrix}\pxhk^u\\ \pxhk^l \end{bmatrix}$ where $\pxhk^u$ and $\pxhk^l$ are the upper $k\times k$ and lower $(p_1-k)\times k$ sub-matrices of $\pxhk$ respectively. Then
\begin{align*}
\left\|\left(\mtx{I}_{p_1} - \mtx{P}_{\pxk}\right)\mtx{P}_{\pxhk}\right\|_F^2 &= \trace\left(\left(\mtx{I}_{p_1} - \mtx{P}_{\pxk}\right)\pxhk(\pxhk^\top \pxhk)^{-1} \pxhk^\top \left(\mtx{I}_{p_1} - \mtx{P}_{\pxk}\right)\right),
\\
\left\|\left(\mtx{I}_{p_1} - \mtx{P}_{\pxk}\right)\mtx{P}_{\pxhk}\right\|^2 &= \lambda_{\max}\left(\left(\mtx{I}_{p_1} - \mtx{P}_{\pxk}\right)\pxhk(\pxhk^\top \pxhk)^{-1} \pxhk^\top \left(\mtx{I}_{p_1} - \mtx{P}_{\pxk}\right)\right)
\end{align*}
Since
\begin{align*}
&\left(\mtx{I}_{p_1} - \mtx{P}_{\pxk}\right)\pxhk(\pxhk^\top \pxhk)^{-1} \pxhk^\top \left(\mtx{I}_{p_1} - \mtx{P}_{\pxk}\right) 
\\
&\preceq \frac{1}{\sigma_k^2(\pxhk)}\left(\mtx{I}_{p_1} - \mtx{P}_{\pxk}\right)\pxhk \pxhk^\top \left(\mtx{I}_{p_1} - \mtx{P}_{\pxk}\right) = \frac{1}{\sigma_k^2(\pxhk)} \begin{bmatrix}\mtx{0}_{k \times k} \\ \pxhk^l \end{bmatrix}\begin{bmatrix}\mtx{0}_{k \times k} & {(\pxhk^l)}^\top \end{bmatrix},
\end{align*}
we have
\begin{align}
\label{eq:frobenius_control}
\left\|\left(\mtx{I}_{p_1} - \mtx{P}_{\pxk}\right)\mtx{P}_{\pxhk}\right\|_F^2 &\leq \trace\left(\frac{1}{\sigma_k^2(\pxhk)} \begin{bmatrix}\mtx{0}_{k \times k} \\ \pxhk^l \end{bmatrix}\begin{bmatrix}\mtx{0}_{k \times k} & \pxhk^\top \end{bmatrix}\right) = \frac{\|\pxhk^l\|_F^2}{\sigma_k^2(\pxhk)},
\end{align}
and
\begin{align}
\label{eq:operator_control}
\left\|\left(\mtx{I}_{p_1} - \mtx{P}_{\pxk}\right)\mtx{P}_{\pxhk}\right\|^2 &\leq \lambda_{\max}\left(\frac{1}{\sigma_k^2(\pxhk)} \begin{bmatrix}\mtx{0}_{k \times k} \\ \pxhk^l \end{bmatrix}\begin{bmatrix}\mtx{0}_{k \times k} & \pxhk^\top \end{bmatrix}\right) = \frac{\|\pxhk^l\|^2}{\sigma_k^2(\pxhk)}.
\end{align}
Therefore, it suffices to give upper bounds of $\|\pxhk^l\|_F^2$ and $\|\pxhk^l\|^2$, as well as a lower bound of $\sigma_k^2(\pxhk)$.

\subsection{Basic bounds}
Recall that
\[
\sx=\bI_{p_1}, ~\sy=\bI_{p_2}, ~\sxy=[\mtx{\Lambda}, \mtx{0}_{p_1 \times (p_2 - p_1)}]:= \widetilde{\mtx{\Lambda}}.
\]
Then
\[
\Cov\left(\begin{bmatrix} \vct{x} \\ \vct{y} \end{bmatrix}\right):=\s =\begin{bmatrix}
\bI_{p_1} & \widetilde{\mtx{\Lambda}}\\
\widetilde{\mtx{\Lambda}}^\top&\bI_{p_2}
\end{bmatrix}
\]
and
\[
\widehat{\Cov}\left(\begin{bmatrix} \vct{x} \\ \vct{y} \end{bmatrix}\right):=\sh =\begin{bmatrix}
\sxh & \sxyh\\
\syxh&\syh
\end{bmatrix}.
\]
Moreover, we can define $\sh_{2p_1}$ as the left upper $(2p_1) \times (2p_1)$ principal submatrix of $\sh$. We can similarly define $\s_{2p_1}$.

\begin{lemma}
\label{lem:estimation_0}
There exist universal constants $\gamma$, $C$ and $C_0$ such that when $n\geq C_0 p_1$, then with probability at least $1-e^{-\gamma p_1}$, the following inequalities hold
\[
\|\s_{2p_1} -\sh_{2p_1}\|, \|\bI_{p_1} - \sh_x\|, \left\|\sxh^{\half}-\bI_{p_1}\right\| \leq C\sqrt{\frac{p_1}{n}}.
\]
\end{lemma}

\begin{proof}~\\
It is obvious that $\|\mtx{\Sigma}_{2p_1}\| \leq 2$. By \prettyref{lem:vershynin}, there exist constants $\gamma$, $C_0$ and $C_1$, such that when $n\geq C_0 p_1$, with probability at least $1-e^{-\gamma p_1}$ there holds
\[
\|\sh_{2p_1} - \s_{2p_1}\|\leq C_1\sqrt{\frac{p_1}{n}}.
\]
As submatrices, we have $\|\mtx{I}_{p_1} - \sh_x\| \leq C_1\sqrt{\frac{p_1}{n}}$. Moreover,
 \[
 \|\bI_{p_1}-\sxh\|=\|(\bI_{p_1} - \sxh^{\half})(\bI_{p_1} + \sxh^{\half})\|\geq \sigma_{\min}(\bI_{p_1} + \sxh^{\half})\|\bI_{p_1} - \sxh^{\half}\|\geq \|\bI_{p_1} - \sxh^{\half}\|,
\]
which implies $\|\bI_{p_1} - \sxh^{\half}\| \leq C_1\sqrt{\frac{p_1 + p_2}{n}}$.
\end{proof}

\begin{lemma}
\label{lem:estimation}
There exist universal constants $c$, $C$ and $C_0$ such that when $n\geq C_0(p_1+p_2)$, then with probability at least $1-e^{-c (p_1 + p_2)}$, the following inequalities hold
\begin{gather*}
\|\s -\sh\|, \|\bI_{p_2} - \sh_y\|, \|\s_{xy} - \sh_{xy}\|, \left\|\syh^{\half}-\bI_{p_2}\right\| \leq C\sqrt{\frac{p_1 + p_2}{n}},\\
\|\hbLambda -\bLambda\| \leq \|\sxh^{-1/2} \sxyh\syh^{-1/2} -\sxy\| \leq C\sqrt{\frac{p_1 + p_2}{n}},\\
\sigma_k^2(\pxhk)\geq \frac{1}{2},~ \|\pxhk\|^2 \leq \frac{3}{2},~ \sigma_k^2(\pyhk)\geq \frac{1}{2},~ \|\pyhk\|^2 \leq \frac{3}{2},\\
\|\pxhk^l\|, ~\|\pyhk^l\| \leq  \frac{C}{\Delta}\sqrt{\frac{p_1 + p_2}{n}},
\end{gather*}
where $\Delta=\lambda_k-\lambda_{k+1}$ is the eigen-gap. 
\end{lemma}
The proof is deferred to \prettyref{sec:proof_key}.

\subsection{Estimating Equations and upper bound of $\|\pxhk^l\|^2$}
In this section, we aim to give a sharp upper bound for $\|\pxhk^l\|^2$. Notice that we have already established an upper bound in \prettyref{lem:estimation}, where Wedin's $\sin \theta$ law plays the essential role. However, this bound is actually too loose for our purpose. Therefore, we need to develop new techniques to sharpen the results.

Recall that $\pxh\in\mathbb{R}^{p_1\times p_1}, \pyh\in\mathbb{R}^{p_2\times p_1}$ consist of the sample canonical coefficients. By definition, the sample canonical coefficients satisfy the following two estimating equations (because $\sxh^{\half}\pxh$ and $\syh^{\half}\pyh$ are left and right singular vectors of $\sxh^{-\half}\sxyh\syh^{-\half}$ respectively), 
\begin{equation}
\label{eq:estimate}
\begin{aligned}
\sxyh\pyh&=\sxh\pxh\hbLambda 
\\
\syxh\pxh&= \syh \pyh \hbLambda .
\end{aligned}
\end{equation}
If we define define
\begin{align}
\bLambda=\begin{bmatrix}
\bLambda_1 & \\
& \bLambda_2 
\end{bmatrix} \in \mathbb{R}^{p_1 \times p_1},~~
\hbLambda=\begin{bmatrix}
\hbLambda_1 & \\
& \hbLambda_2 
\end{bmatrix} \in \mathbb{R}^{p_1 \times p_1},
\label{eq:p1p2}
\end{align}
where $\bLambda_1,~\hbLambda_1$ are $k\times k$ diagonal matrices while $\bLambda_2,~\hbLambda_2$ are $(p_1 - k) \times (p_1 - k)$ diagonal matrices. Then \prettyref{eq:estimate} imply 
\begin{equation}
\label{eq:estimate1}
\begin{aligned}
\sxyh\pyh_{1:k}&=\sxh\pxh_{1:k}\hbLambda_1 
\\
\syxh\pxh_{1:k}&= \syh \pyh_{1:k} \hbLambda_1.
\end{aligned}
\end{equation}

Divide the matrices into blocks, 
\begin{align*}
\sxh=\begin{bmatrix}
\sxh^{11} &\sxh^{12}\\
\sxh^{21}&\sxh^{22}
\end{bmatrix}, ~
\syh=\begin{bmatrix}
\syh^{11} &\syh^{12}\\
\syh^{21}&\syh^{22}
\end{bmatrix}, ~
\sxyh=\begin{bmatrix}
\sxyh^{11} &\sxyh^{12}\\
\sxyh^{21}&\sxyh^{22}
\end{bmatrix},~
\syxh=\begin{bmatrix}
\syxh^{11} &\syxh^{12}\\
\syxh^{21}&\syxh^{22}
\end{bmatrix}
\end{align*}
where $\sxh^{11}, \syh^{11}, \sxyh^{11}, \syxh^{11}$ are $k\times k$ matrices. 
Finally, we define $\pyhk^u\in\mathbb{R}^{k\times k}, \pyhk^l\in\mathbb{R}^{(p_2-k)\times k}$ in the same way as $\pxhk^u, \pxhk^l$. With these blocks, \prettyref{eq:estimate1} can be rewritten as
\begin{align}
\label{eq:est1} \sxyh^{21}\pyhk^u+\sxyh^{22}\pyhk^l&=\sxh^{21}\pxhk^u\hbLambda_1+\sxh^{22}\pxhk^l \hbLambda_1, 
\\
\label{eq:est2} \syxh^{21}\pxhk^u+\syxh^{22}\pxhk^l&=\syh^{21}\pyhk^u\hbLambda_1+\syh^{22}\pyhk^l \hbLambda_1, 
\\
\label{eq:est3} \sxyh^{11}\pyhk^u+\sxyh^{12}\pyhk^l&=\sxh^{11}\pxhk^u\hbLambda_1+\sxh^{12}\pxhk^l \hbLambda_1, 
\\
\label{eq:est4} \syxh^{11}\pxhk^u+\syxh^{12}\pxhk^l&=\syh^{11}\pyhk^u\hbLambda_1+\syh^{12}\pyhk^l \hbLambda_1.
\end{align}
Define the zero-padding of $\mtx{\Lambda}_2$: 
\[
\widetilde{\mtx{\Lambda}}_2 := [\mtx{\Lambda}_2, \mtx{0}] = \mtx{\Sigma}_{xy}^{22}\in \mathbb{R}^{(p_1 - k) \times (p_2 - k)}.
\]
The above equations imply the following lemma:
\begin{lemma}
\label{lem:pillar_final}
The equality \prettyref{eq:estimate1} gives the following result
\begin{align}
\label{eq:key1} \pxhk^l \bLambda_1^2-\bLambda_2^2\pxhk^l &=\bB\pxhk^u+\mtx{R}
\\
\label{eq:key2} & = (\widehat{\mtx{\Sigma}}_{xy}^{21} - \widehat{\mtx{\Sigma}}_{x}^{21} \mtx{\Lambda}_1) \widehat{\mtx{\Psi}}^u_{1:k} \mtx{\Lambda}_1 + \widetilde{\mtx{\Lambda}}_2(\widehat{\mtx{\Sigma}}_{yx}^{21} - \widehat{\mtx{\Sigma}}_{y}^{21} \mtx{\Lambda}_1)\widehat{\mtx{\Phi}}^u_{1:k} + \widetilde{\mtx{R}}
\end{align}
where
\begin{align*}
\bB&:=\sxyh^{21}\bLambda_1+\widetilde{\bLambda}_2 \syxh^{21}-\sxh^{21}\bLambda_1^2 - \widetilde{\bLambda}_2\syh^{21}\bLambda_1,
\\
\widetilde{\mtx{R}}&:=(\sxh^{21}\mtx{R}_1 - \mtx{R}_3)\bLambda_1-\widetilde{\bLambda}_2(\syh^{21}\mtx{R}_2+\mtx{R}_4),
\\
\mtx{R}&:= \widetilde{\mtx{R}} - (\sxyh^{21}-\sxh^{21}\bLambda_1)\mtx{R}_2.
\end{align*}
and
\begin{align*}
\mtx{R}_1&:= \pxhk^u(\hbLambda_1-\bLambda_1)+(\sxh^{11}-\bI_k)\pxhk^u\hbLambda_1+\sxh^{12}\pxhk^l \hbLambda_1-(\sxyh^{11}-\bLambda_1)\pyhk^u-\sxyh^{12}\pyhk^l,
\\
\mtx{R}_2&:= \pyhk^u(\hbLambda_1-\bLambda_1)+(\syh^{11}-\bI_k)\pyhk^u\hbLambda_1+\syh^{12}\pyhk^l \hbLambda_1-(\syxh^{11}-\bLambda_1)\pxhk^u-\syxh^{12}\pxhk^l,
\\
\mtx{R}_3&:= \sxh^{21}\pxhk^u(\hbLambda_1-\bLambda_1) + (\sxh^{22}\pxhk^l \hbLambda_1-\pxhk^l \bLambda_1)-(\sxyh^{22}-\widetilde{\bLambda}_2)\pyhk^l,
\\
\mtx{R}_4&:= \syh^{21}\pyhk^u(\hbLambda_1-\bLambda_1) + (\syh^{22}\pyhk^l \hbLambda_1-\pyhk^l \bLambda_1)-(\syxh^{22}-\widetilde{\bLambda}_2^\top)\pxhk^l.
\end{align*}
\end{lemma}
The proof is deferred to \prettyref{sec:proof_key}.

~\\
By \prettyref{lem:estimation}, one can easily obtain that
\[
\|\mtx{R}_1\|, \|\mtx{R}_2\| \leq C\sqrt{\frac{p_1 + p_2}{n}}.
\]
Recall that
\[
\mtx{R}_3:= \sxh^{21}\pxhk^u(\hbLambda_1-\bLambda_1) + (\sxh^{22}\pxhk^l \hbLambda_1-\pxhk^l \bLambda_1)-(\sxyh^{22}-\widetilde{\bLambda}_2)\pyhk^l
\]
By \prettyref{lem:estimation}, we have 
\[
\|\sxh^{21}\pxhk^u(\hbLambda_1-\bLambda_1)\|\leq C\frac{p_1 + p_2}{n},~~\|(\sxyh^{22}-\widetilde{\bLambda}_2)\pyhk^l\| \leq C\frac{p_1 + p_2}{\Delta n},
\]
and
\begin{align*}
\|\sxh^{22}\pxhk^l \hbLambda_1-\pxhk^l \bLambda_1\| &\leq \|(\sxh^{22} - \mtx{I}_{p_1 - k})\pxhk^l \hbLambda_1 + \pxhk^l (\hbLambda_1 -\bLambda_1)\| 
\\
&\leq \|(\sxh^{22} - \mtx{I}_{p_1 - k})\pxhk^l \hbLambda_1\| + \|\pxhk^l (\hbLambda_1 -\bLambda_1)\| \leq C\frac{p_1 + p_2}{\Delta n}.
\end{align*}
Therefore, we get $\|\mtx{R}_3\| \leq C\frac{p_1 + p_2}{\Delta n}$. Similarly, $\|\mtx{R}_4\| \leq C\frac{p_1 + p_2}{\Delta n}$.\\

Combined with \prettyref{lem:estimation}, we have
\[
\|\widetilde{\mtx{R}}\| = \|(\sxh^{21}\mtx{R}_1 - \mtx{R}_3)\bLambda_1-\widetilde{\bLambda}_2(\syh^{21}\mtx{R}_2+\mtx{R}_4)\| \leq C\frac{p_1 + p_2}{\Delta n}
\]
and
\[
\|\mtx{R}\| \leq \|\widetilde{\mtx{R}}\| + \|\sxyh^{21}-\sxh^{21}\bLambda_1\|\|\mtx{R}_2\| \leq C\frac{p_1 + p_2}{\Delta n}.
\]
The proof of the following lemma is deferred to  \prettyref{sec:proof_key}:
\begin{lemma}
\label{lem:operator_bound}
If $n \geq C_0(p_1 + p_2)$, then with probability $1 - c_0\exp(-\gamma p_1)$,
\[
\|\pxhk^l\| \leq C \left[\sqrt{\frac{p_1(1-\lambda_k^2)(1-\lambda_{k+1}^2)}{n\Delta^2}} + \frac{(p_1 + p_2)}{n \Delta^2}\right].
\]
\end{lemma}

\subsection{Upper bounds of risks}
Notice that the inequality \prettyref{eq:operator_control} yields
\[
\left\|\left(\mtx{I}_{p_1} - \mtx{P}_{\pxk}\right)\mtx{P}_{\pxhk}\right\|^2 \leq \frac{\|\pxhk^l\|^2}{\sigma_k^2(\pxhk)}.
\]
By \prettyref{lem:operator_bound} and \prettyref{lem:estimation}, we know on an event $G$ with probability at least $1- Ce^{-\gamma p_1}$, 
\[
\left\|\left(\mtx{I}_{p_1} - \mtx{P}_{\pxk}\right)\mtx{P}_{\pxhk}\right\|^2 \leq C\left[\frac{p_1(1-\lambda_k^2)(1-\lambda_{k+1}^2)}{n\Delta^2} + \frac{(p_1 + p_2)^2}{n^2 \Delta^4}\right].
\]
Moreover, since $\left\|\left(\mtx{I}_{p_1} - \mtx{P}_{\pxk}\right)\mtx{P}_{\pxhk}\right\|^2 \leq 1$, by \prettyref{eq:max_representation}, we have
 
\begin{align*}
\E \lossmax(\mtx{\Phi}_{1:k}, \widehat{\mtx{\Phi}}_{1:k})
&=\E \left\|\left(\mtx{I}_{p_1} - \mtx{P}_{\pxk}\right)\mtx{P}_{\pxhk}\right\|^2 \leq C\left[\frac{p_1(1-\lambda_k^2)(1-\lambda_{k+1}^2)}{n\Delta^2} + \frac{(p_1 + p_2)^2}{n^2 \Delta^4} + e^{-\gamma p_1}\right].
\end{align*}

Since $\left(\mtx{I}_{p_1} - \mtx{P}_{\pxk}\right)\mtx{P}_{\pxhk}$ is of at most rank-$k$, we have 
\[
\frac{1}{k}\left\|\left(\mtx{I}_{p_1} - \mtx{P}_{\pxk}\right)\mtx{P}_{\pxhk}\right\|_F^2 \leq \left\|\left(\mtx{I}_{p_1} - \mtx{P}_{\pxk}\right)\mtx{P}_{\pxhk}\right\|^2
\]
Then by \prettyref{eq:ave_representation} and the previous inequality, we have
\begin{align*}
\E \lossave(\mtx{\Phi}_{1:k}, \widehat{\mtx{\Phi}}_{1:k})
&=\E \left\|\left(\mtx{I}_{p_1} - \mtx{P}_{\pxk}\right)\mtx{P}_{\pxhk}\right\|^2 
\\
&=\E \frac{1}{k}\left\|\left(\mtx{I}_{p_1} - \mtx{P}_{\pxk}\right)\mtx{P}_{\pxhk}\right\|_F^2 
\\
&\leq \E \left\|\left(\mtx{I}_{p_1} - \mtx{P}_{\pxk}\right)\mtx{P}_{\pxhk}\right\|^2
\\
&\leq C\left[\frac{p_1(1-\lambda_k^2)(1-\lambda_{k+1}^2)}{n\Delta^2} + \frac{(p_1 + p_2)^2}{n^2 \Delta^4} + e^{-\gamma p_1}\right].
\end{align*}

In fact, the factor $p_1$ in the main term can be reduced to $p_1-k$ by similar arguments as done for the operator norm. The Frobenius norm version of \prettyref{lem:operator_bound} is actually much simpler. We omit the proof to avoid unnecessary redundancy and repetition.


%


\subsection{Supporting lemmas in linear algebra and probability} 
\begin{definition}(Hadamard Operator Norm)
\label{def:Hadamard_norm}
For $\bA\in\mathbb{R}^{m\times n}$, define the Hadamard operator norm as 
\[
\norm{\bA}=\sup\left\{\|\bA\circ \bB\|: \|\bB\|\leq 1, \bB\in\mathbb{R}^{m\times n}\right\}
\]
Let $\alpha_1, \cdots, \alpha_m$ and $\beta_1, \cdots, \beta_n$ be arbitrary positive numbers lower bounded by a positive constant $\delta$. 
\end{definition} 
\begin{lemma}
\label{lem:hada-norm}
Let $\{\alpha_i\}_{i=1}^m$ and $\{\beta_i\}_{i=1}^n$ be two sequences of positive numbers.  for any $\mtx{X} \in \mathbb{R}^{m \times n}$, there hold
\begin{equation}
\label{eq:ave_hada}
\left\| \left[\frac{\sqrt{\alpha_i\beta_j}}{\alpha_i + \beta_j}\right] \circ \mtx{X}\right \| \leq \frac{1}{2} \|\mtx{X}\|,
\end{equation}
and
\begin{equation}
\label{eq:max_hada}
\left\| \left[\frac{\min(\alpha_i, \beta_j)}{\alpha_i + \beta_j}\right] \circ \mtx{X}\right \| \leq \frac{1}{2} \|\mtx{X}\|, \quad \left\| \left[\frac{\max(\alpha_i, \beta_j)}{\alpha_i + \beta_j}\right] \circ \mtx{X}\right \| \leq \frac{3}{2} \|\mtx{X}\|.
\end{equation}
\end{lemma} 
\begin{proof}
The proof of \prettyref{eq:ave_hada} can be found in ``Norm Bounds for Hadamard Products and an Arithmetic-Geometric Mean Inequality for Unitarily Invariant Norms" by Horn.\\

Denote 
\[
\mtx{G}_1 =  \left[\frac{\max(\alpha_i, \beta_j)}{\alpha_i + \beta_j}\right], \mtx{G}_2 =  \left[\frac{\min(\alpha_i, \beta_j)}{\alpha_i + \beta_j}\right]
\]

The proof of \prettyref{eq:max_hada} relies on the following two results. 
\begin{lemma}(Theorem 5.5.18 of \cite{hom1991topics})
If $\bA, \bB\in\mathbb{R}^{n\times n}$ and $\bA$ is  positive semidefinite. Then, 
\begin{equation}
\|\bA\circ \bB\|\leq \left(\max_{1\leq i\leq n} \bA_{ii}\right)\|\bB\|, 
\nonumber
\end{equation}
where $\|\cdot\|$ is the operator norm. 
\label{lem:hada}
\end{lemma}
\begin{lemma}(Theorem 3.2 of \cite{mathias1993hadamard})
The symmetric matrix 
$$\Big(\frac{\min(a_i, a_j)}{a_i+a_j}\Big)_{1\leq i, j\leq n}$$
is positive semidefinite if $a_i>0, 1\leq i\leq n$.
\label{lem:max}
\end{lemma}

Define $\gamma_i=\beta_i, 1\leq i\leq n$ and $\gamma_{i}=\alpha_{i-n}, n+1\leq i\leq m+n$. Define $\bM\in\mathbb{R}^{(m+n)\times (m+n)}$ by 
\[
M_{ij}=\frac{\min\{\gamma_i, \gamma_j\}}{\gamma_i+\gamma_j}. 
\]
By \prettyref{lem:max}, $\bM$ is also positive semidefinite. Again, apply \prettyref{lem:hada} and notice that $\mtx{G}_2$ is the lower left sub-matrix of $\bM$, It is easy to obtain
\[
\norm{\mtx{G}_2}\leq \norm{\bM}\leq \frac{1}{2}. 
\]
Finally, since $\mtx{G}_1\circ \bB=\bB-\mtx{G}_2 \circ \bB$ for any $\bB$, we have
\[
\|\mtx{G}_1\circ \bB\|\leq\|\bB\|+\|\mtx{G}_2 \circ \bB\|, 
\]
which implies, 
\[
\norm{\mtx{G}_1}\leq 1+\norm{\mtx{G}_2}\leq \frac{3}{2}. 
\]
\end{proof}

\begin{lemma}(Covariance Matrix Estimation,  Remark 5.40 of \cite{vershynin2010introduction})
\label{lem:vershynin}
Assume $\bA\in \mathbb{R}^{n\times p}$ has independent sub-gaussian random rows with second moment matrix $\bSigma$. Then there exists universal constant $C$ such that for every $t\geq 0$, the following inequality holds with probability at least $1-e^{-ct^2}$,
\begin{equation}
\|\frac{1}{n}\bA^\top\bA-\bSigma\|\leq \max\{\delta, \delta^2\}\|\bSigma\| \quad\quad \delta=C\sqrt{\frac{p}{n}}+\frac{t}{\sqrt{n}}.
\nonumber
\end{equation}
\end{lemma}

\begin{lemma}(Bernstein inequality, Proposition 5.16 of \cite{vershynin2010introduction})
Let $X_1, \cdots, X_n$ be independent centered sub-exponential random variables and $K=\max_{i}\|X_i\|_{\psi_1}$. Then for every $\ba=(\ba_1, \cdots, \ba_n)\in\mathbb{R}^n$ and every $t\geq 0$, we have
\begin{equation}
P\left\{|\sum_{i=1}^n \ba_i X_i|\geq t \right\}\leq 2exp\left\{-c\min\left(\frac{t^2}{K^2\|\ba\|_2^2}, \frac{t}{K\|\ba\|_{\infty}}\right)\right\}.
\nonumber
\end{equation}
\label{Bernstein}
\end{lemma}
\begin{lemma}(Hanson-Wright inequality, Theorem 1.1 of \cite{hw2013})
Let $\bx=(x_1, \cdots, x_p)$ be a random vector with independent components $x_i$ which satisfy $\E x_i=0$ and $\|x_i\|_{\psi_2}\leq K$, Let $\bA\in\mathbb{R}^{p\times p}$. Then there exists universal constant $c$ such that for every $t\geq 0$,
\begin{equation}
P\left\{|\bx^\top \bA\bx-\E\bx^\top \bA\bx|\geq t \right\}\leq 2exp\left\{-c\min\left(\frac{t^2}{K^4\|\bA\|_F^2}, \frac{t}{K^2\|\bA\|}\right)\right\}.
\nonumber
\end{equation}
\label{HW-inequality}
\end{lemma}

\begin{lemma}(Covering Number of the Sphere, Lemma 5.2 of \cite{vershynin2010introduction}). The unit Euclidean sphere $\mathbb{S}^{n-1}$ equipped with the Euclidean metric satisfies for every $\epsilon> 0$ that
\begin{equation}
|\mathcal{N}(\mathbb{S}^{n-1}, \epsilon)|\leq (1+\frac{2}{\epsilon})^n,
\nonumber
\end{equation}
where $\mathcal{N}(\mathbb{S}^{n-1}, \epsilon)$ is the $\epsilon$-net of $\,\mathbb{S}^{n-1}$ with minimal cardinality. 
\label{CoverNum}
\end{lemma}

The following variant of Wedin's $\sin\theta$ law \citep{wedin1972perturbation} is proved in Proposition 1 of  \cite{cai2015optimal}. 
\begin{lemma}
For $\bA, \bE\in \mathbb{R}^{m\times n}$ and $\hbA=\bA+\bE$, define the singular value decompositions of $\bA$ and $\hbA$ as 
\[
\bA=\bU\bD\bV^\top,~ \hbA=\hbU\hbD\hbV^\top.
\]
Then the following perturbation bound holds, 
\[
\left\|\left(\bI-\bP_{\bU_{1:k}}\right)\bP_{\hbU_{1:k}}\right\|=\left\|\bP_{\bU_{1:k}}-\bP_{\hbU_{1:k}}\right\|\leq \frac{2\|\bE\|}{\sigma_{k}(\bA)-\sigma_{k+1}(\bA)},
\]
where $\sigma_k(\bA), \sigma_{k+1}(\bA)$ are the $k_{th}$ and $(k+1)_{th}$ singular values of $\bA$. 
\label{lemma-wedin}
\end{lemma}

\subsection{Proofs of key lemmas}
\label{sec:proof_key}
\subsubsection{Proof of \prettyref{lem:estimation}}

(1) The proof of 
\[
\|\s -\sh\|, \|\bI_{p_2} - \sh_y\|, \|\s_{xy} - \sh_{xy}\|, \left\|\syh^{\half}-\bI_{p_2}\right\| \leq C\sqrt{\frac{p_1 + p_2}{n}}
\]
is exactly the same as that of \prettyref{lem:estimation_0}.\\
\\
(2) Observe that
\begin{align*}
\sxh^{-1/2} \sxyh\syh^{-1/2} -\sxy&=(\bI_{p_1}-\sxh^{1/2})\sxh^{-1/2}\sxyh\syh^{-1/2}\\
&+ \sxh^{1/2}\sxh^{-1/2}\sxyh\syh^{-1/2}(\bI_{p_2}-\syh^{1/2})+(\sxyh-\sxy).
\end{align*}
and $\|\sxh^{-1/2}\sxyh\syh^{-1/2}\|=\hlambda_1\leq 1$. Then
\begin{align*}
\|\sxh^{-1/2} \sxyh\syh^{-1/2} -\sxy\|&\leq  \|\bI_{p_1}-\sxh^{1/2}\|+ \|\sxh\|\|\bI_{p_2}-\syh^{1/2}\|+\|\sxyh-\sxy\|. 
\end{align*}
Notice that $\hbLambda$ and $\bLambda$ are singular values of $\sxh^{-1/2} \sxyh\syh^{-1/2}$ and $\sxy$ respectively. Hence by the famous Weyl's inequality for singular values,
\begin{align*}
\|\hbLambda -\bLambda\| &\leq \|\sxh^{-1/2} \sxyh\syh^{-1/2} -\sxy\| 
\\
& \leq \|\bI_{p_1}-\sxh\|+ \|\sxh\|\|\bI_{p_2}-\syh^{1/2}\| + \|\sxyh-\sxy\|
\\
& \leq \left(3 + C_1\sqrt{\frac{p_1 + p_2}{n}}\right)C_1\sqrt{\frac{p_1 + p_2}{n}} \leq C_2\sqrt{\frac{p_1 + p_2}{n}}.
\end{align*}

(3) Since $\sxh^{\half}\pxh$ are left singular vectors of $\sxh^{-\half}\sxyh\syh^{-\half}$, we have $\|\sxh^{\half}\pxh\|=1$, $\pxh^\top\sxh\pxh=\bI_{p_1}$ and $\pxh^\top\pxh-\bI_{p_1}=-\pxh^\top(\sxh-\bI_{p_1})\pxh$. Then we have, 
\begin{align*}
\|\pxh^\top\pxh-\bI_{p_1}\|&= \|\pxh^\top(\sxh-\bI_{p_1})\pxh\|\leq \|\pxh^\top\sxh^{1/2}\|\|\sxh^{-1/2}(\sxh-\bI_{p_1})\sxh^{-1/2}\|\|\sxh^{1/2}\pxh\|\\
&=\|\sxh^{-1/2}(\sxh-\bI_{p_1})\sxh^{-1/2}\|. 
\end{align*}
As a submatrix, 
\begin{align*}
\|\pxhk^\top\pxhk-\bI_k\|&\leq \|\sxh^{-1/2}(\sxh-\bI_{p_1})\sxh^{-1/2}\| \leq \|\sxh^{-1}\| \|\sxh-\bI_{p_1}\|\\
&\leq \frac{1}{1-\|\sxh-\bI_{p_1}\|}\|\sxh-\bI_{p_1}\| \leq \frac{\|\sh-\s\|}{1-\|\sh-\s\|} \leq \frac{1}{2}
\end{align*}
as long as $n \geq C_0 (p_1 + p_2)$ for sufficiently large $C_0$. In this case,
\begin{align*}
\sigma_k^2(\pxhk)\geq 1/2, ~ \|\pxhk\|^2\leq 3/2. 
\end{align*}
By the same argument, 
\[
\sigma_k^2(\pyhk)\geq 1/2, ~ \|\pyhk\|^2\leq 3/2.
\]

(4) Recall that 
\[
\px_{1:k}=\begin{bmatrix} \mtx{I}_k \\ \mtx{0}_{(p_1 - k) \times k} \end{bmatrix}, ~\py_{1:k}=\begin{bmatrix} \mtx{I}_{k} \\ \mtx{0}_{(p_2 - k) \times k} \end{bmatrix}.
\]
The last inequality in the lemma relies on the fact that $\sxh^{\half}\pxhk$ and $\px_{1:k}$ are leading $k$ singular vectors of $\sxh^{-1/2} \sxyh\syh^{-1/2}$ and $\sxy$ respectively. By a variant of Wedin's $\sin\theta$ law as stated in Lemma~\ref{lemma-wedin}, 
\begin{align*}
\left\|\bP_{\sxh^{\half}\pxhk}(\bI_{p_1}-\bP_{\px_{1:k}})\right\|\leq \frac{2\|\sxh^{-1/2} \sxyh\syh^{-1/2} -\sxy\|}{\Delta} \leq \frac{2C_2}{\Delta} \sqrt{\frac{p_1 + p_2}{n}}.
\end{align*}
On the other hand, 
\begin{align*}
\left\|\bP_{\sxh^{\half}\pxhk}(\bI_{p_1}-\bP_{\px_{1:k}})\right\|&=\left\|\sxh^{\half}\pxhk(\sxh^{\half}\pxhk)^\top(\bI_{p_1}-\bP_{\px_{1:k}})\right\|\\
&=\left\|(\sxh^{\half}\pxhk)^\top(\bI_{p_1}-\bP_{\px_{1:k}})\right\|\\
&=\left\|(\sxh^{\half}\pxhk)^l\right\|,
\end{align*}
Here the second equality is due to the fact that $\sxh^{\half}\pxhk$ has orthonormal columns. Moreover, $(\sxh^{\half}\pxhk)^l$ denotes the lower $(p_1-k)\times k$ sub-matrix of $\sxh^{\half}\pxhk$. Again, by triangle inequality, 
\begin{align*}
\left\|\pxhk^l\right\|&=\left\|(\sxh^{\half}\pxhk)^l-\left((\sxh^{\half}-\bI_{p_1})\pxhk\right)^l\right\|\\
&\leq \left\|(\sxh^{\half}\pxhk)^l\right\|+\left\|(\sxh^{\half}-\bI_{p_1})\right\| \left\|\pxhk\right\|\\
&\leq \frac{2C_2}{\Delta} \sqrt{\frac{p_1 + p_2}{n}} + \sqrt{\frac{3}{2}}C_1\sqrt{\frac{p_1 + p_2}{n}} \leq \frac{C_3}{\Delta} \sqrt{\frac{p_1 + p_2}{n}}.
\end{align*}
The last inequality is due to $\Delta \leq 1$. Let $C = \max(C_1, C_2, C_3)$, the proof is done.

\subsubsection{Proof of \prettyref{lem:pillar_final}}
The equality \prettyref{eq:est3} implies
\begin{align}
\bLambda_1\pyhk^u- \pxhk^u\bLambda_1&=  \pxhk^u(\hbLambda_1-\bLambda_1)+(\sxh^{11}-\bI_k)\pxhk^u\hbLambda_1+\sxh^{12}\pxhk^l \hbLambda_1\nonumber\\
&~~~-(\sxyh^{11}-\bLambda_1)\pyhk^u-\sxyh^{12}\pyhk^l:=\mtx{R}_1.  \label{eq:delta1}
\end{align}
Similarly, \prettyref{eq:est4} implies
\begin{align}
\bLambda_1\pxhk^u- \pyhk^u\bLambda_1&=  \pyhk^u(\hbLambda_1-\bLambda_1)+(\syh^{11}-\bI_k)\pyhk^u\hbLambda_1+\syh^{12}\pyhk^l \hbLambda_1 \nonumber\\
&~~~-(\syxh^{11}-\bLambda_1)\pxhk^u-\syxh^{12}\pxhk^l:=\mtx{R}_2 .\label{eq:delta2}
\end{align}
The equality \prettyref{eq:est1} is equivalent to
\begin{align*}
\sxyh^{21}\pyhk^u+\widetilde{\bLambda}_2\pyhk^l+(\sxyh^{22}-\widetilde{\bLambda}_2)\pyhk^l&=\sxh^{21}\pxhk^u\bLambda_1+\sxh^{21}\pxhk^u(\hbLambda_1-\bLambda_1)\\
&+\pxhk^l \bLambda_1 +(\sxh^{22}\pxhk^l \hbLambda_1-\pxhk^l \bLambda_1), 
\end{align*}
which can be written as 
\begin{align}
\sxyh^{21}\pyhk^u+\widetilde{\bLambda}_2\pyhk^l&-\sxh^{21}\pxhk^u\bLambda_1-\pxhk^l \bLambda_1 =\sxh^{21}\pxhk^u(\hbLambda_1-\bLambda_1)\nonumber\\
&+(\sxh^{22}\pxhk^l \hbLambda_1-\pxhk^l \bLambda_1)-(\sxyh^{22}-\widetilde{\bLambda}_2)\pyhk^l:=\mtx{R}_3. \label{eq:pillar7}
\end{align}
Apply the same argument to \prettyref{eq:est2}, we obtain
\begin{align}
\syxh^{21}\pxhk^u+\widetilde{\bLambda}_2^\top\pxhk^l&-\syh^{21}\pyhk^u\bLambda_1-\pyhk^l \bLambda_1 =\syh^{21}\pyhk^u(\hbLambda_1-\bLambda_1)\nonumber\\
&+(\syh^{22}\pyhk^l \hbLambda_1-\pyhk^l \bLambda_1)-(\syxh^{22}-\widetilde{\bLambda}_2^\top)\pxhk^l:=\mtx{R}_4. \label{eq:pillar8}
\end{align}
Consider $\eqref{eq:pillar7}\times (-\bLambda_1)-\widetilde{\bLambda}_2\times \eqref{eq:pillar8}$, then 
\begin{align*}
\pxhk^l \bLambda_1^2 - {\bLambda}_2^2\pxhk^l +\sxh^{21}\pxhk^u\bLambda_1^2&-\sxyh^{21}\pyhk^u\bLambda_1-\widetilde{\bLambda}_2\syxh^{21}\pxhk^u+\widetilde{\bLambda}_2\syh^{21}\pyhk^u\bLambda_1\\
&=-(\mtx{R}_3\bLambda_1+\widetilde{\bLambda}_2\mtx{R}_4), 
\end{align*}
that is 
\begin{align}
\pxhk^l \bLambda_1^2 - {\bLambda}_2^2\pxhk^l =&\sxyh^{21}\pyhk^u\bLambda_1+\widetilde{\bLambda}_2\syxh^{21}\pxhk^u\nonumber\\
&-\sxh^{21}\pxhk^u\bLambda_1^2-\widetilde{\bLambda}_2\syh^{21}\pyhk^u\bLambda_1-(\mtx{R}_3\bLambda_1+\widetilde{\bLambda}_2\mtx{R}_4).
\label{eq-upper-target}
\end{align}
Combined with \prettyref{eq:delta1} and \prettyref{eq:delta2}, 
\begin{align}
\pxhk^l \bLambda_1^2 - {\bLambda}_2^2\pxhk^l &=\sxyh^{21}\pyhk^u\bLambda_1+\widetilde{\bLambda}_2\syxh^{21}\pxhk^u-\sxh^{21}\bLambda_1\pyhk^u\bLambda_1+\sxh^{21}\mtx{R}_1\bLambda_1\nonumber\\
&~~~ -\widetilde{\bLambda}_2\syh^{21}\bLambda_1\pxhk^u-\widetilde{\bLambda}_2\syh^{21}\mtx{R}_2-(\mtx{R}_3\bLambda_1+\widetilde{\bLambda}_2\mtx{R}_4)  \nonumber\\
&=(\sxyh^{21}-\sxh^{21}\bLambda_1)\pyhk^u\bLambda_1+\widetilde{\bLambda}_2(\syxh^{21}-\syh^{21}\bLambda_1)\pxhk^u  \nonumber\\
&~~~+(\sxh^{21}\mtx{R}_1 - \mtx{R}_3)\bLambda_1-\widetilde{\bLambda}_2(\syh^{21}\mtx{R}_2+\mtx{R}_4).
\label{eq:target_upper1}
\end{align}
This finishes the proof of \prettyref{eq:key2}. 

Plug \prettyref{eq:delta2} into \prettyref{eq:target_upper1}, we get
\begin{align*}
\pxhk^l \bLambda_1^2-\widetilde{\bLambda}_2^2\pxhk^l &=(\sxyh^{21}-\sxh^{21}\bLambda_1)(\bLambda_1\pxhk^u- \mtx{R}_2) +\widetilde{\bLambda}_2(\syxh^{21}-\syh^{21}\bLambda_1)\pxhk^u + \widetilde{\mtx{R}} \nonumber
\\
&=\bB\pxhk^u + (\widetilde{\mtx{R}} - (\sxyh^{21}-\sxh^{21}\bLambda_1)\mtx{R}_2).
\end{align*}
This finishes the proof of \prettyref{eq:key1}.

\subsubsection{Proof of \prettyref{lem:operator_bound}}
First, we discuss two quite different cases: $\lambda_k \geq \frac{1}{2}$ and $\lambda_k < \frac{1}{2}$.
\subsection*{Case 1: $\lambda_k \geq \frac{1}{2}$}
Let 
\[
\delta := \lambda_k^2 - \lambda_{k+1}^2 = (\lambda_k - \lambda_{k+1})(\lambda_k + \lambda_{k+1}) \geq \frac{1}{2} \Delta.
\]
Define the $(p_1-k)\times k$ matrices $\bA$ by 
\[
A_{ij}=\frac{\sqrt{\lambda_j^2 - \lambda_k^2 + \frac{\delta}{2}}\sqrt{\lambda_{k+1}^2 - \lambda_{k+i}^2 + \frac{\delta}{2}}}{\lambda_j^2 - \lambda_{k+i}^2},~ 1\leq i\leq p_1-k, 1\leq j\leq k
\]
By \prettyref{eq:key1} in \prettyref{lem:pillar_final}, there holds
\begin{align*}
\pxhk^l = \bA \circ (\mtx{D}_1 \bB\pxhk^u \mtx{D}_2) + \bA \circ (\mtx{D}_1\mtx{R}\mtx{D}_2), 
\end{align*}
where 
\[
\bD_1=\text{diag}\left(\frac{1}{\sqrt{\frac{\delta}{2}}}, \cdots, \frac{1}{\sqrt{\lambda^2_{k+1} - \lambda^2_{p_1} + \frac{\delta}{2}}}\right) 
\]
and
\[
\bD_2=\text{diag}\left(\frac{1}{\sqrt{\lambda_1^2 - \lambda_k^2 + \frac{\delta}{2}}}, \cdots, \frac{1}{\sqrt{\frac{\delta}{2}}}\right). 
\]
By \prettyref{lem:hada-norm}, we have
\begin{align*}
\|\pxhk^l\| &\leq \frac{1}{2}\|\mtx{D}_1 \bB\pxhk^u \mtx{D}_2\| + \frac{1}{2}\|(\mtx{D}_1\mtx{R}\mtx{D}_2)\|
\\
&\leq \frac{1}{2}\|\bD_1 \bB \| \|\pxhk^u\| \| \mtx{D}_2\| + \frac{1}{2}\|\mtx{D}_1\|\|\mtx{R}\|\|\mtx{D}_2\|.
\end{align*}
Recall that $\|\pxhk^u\| \leq \|\pxhk\| \leq \sqrt{\frac{3}{2}}$ and it is obvious that $\|\mtx{D}_1\|, \|\mtx{D}_2\| \leq \sqrt{\frac{2}{\delta}}$. Moreover, in the previous section, we also have shown that $\|\mtx{R}\| \leq \frac{C(p_1 + p_2)}{n \Delta}$. It suffices to bound $\|\mtx{D}_1\mtx{B}\|$ and to this end we apply the standard covering argument.

\textbf{Step 1. Reduction. } Denote by $\mathcal{N}_\epsilon(\mathbb{S}^d)$ the $d$-dimensional unit ball surface. For $\epsilon>0$ and any pair of vectors $\bu\in\mathbb{R}^{p_1-k}, \bv\in\mathbb{R}^k$, we can choose $\bu_{\epsilon}\in \mathcal{N}_\epsilon(\mathbb{S}^{p_1-k-1}),  \bv_{\epsilon}\in \mathcal{N}_\epsilon(\mathbb{S}^{k-1})$ such that $\|\bu-\bu_{\epsilon}\|, \|\bv-\bv_{\epsilon}\|\leq \epsilon$. Then
\begin{align*}
\bu^\top \bD_1\bB \bv&=\bu^\top \bD_1 \bB \bv - \bu_\epsilon^\top \bD_1\bB \bv+\bu_\epsilon^\top \bD_1\bB \bv-\bu_\epsilon^\top \bD_1\bB \bv_\epsilon + \bu_\epsilon^\top \bD_1\bB \bv_\epsilon\\
&\leq \|\bu-\bu_{\epsilon}\|\| \bD_1 \bB \bv\|+\|\bu_\epsilon^\top\bD_1 \bB \|\|\bv-\bv_{\epsilon}\|+\bu_\epsilon^\top \bD_1 \bB \bv_\epsilon\\
&\leq 2\epsilon\|\bD_1\bB \|+\bu_\epsilon^\top \bD_1\bB \bv_\epsilon\\
&\leq 2\epsilon\|\bD_1 \bB \|+\max\limits_{\bu_{\epsilon}, \bv_{\epsilon}}\bu_\epsilon^\top\bD_1 \bB \bv_\epsilon. 
\end{align*}
Maximize over $\bu$ and $\bv$, we obtain
\begin{equation*}
\|\bD_1 \bB \|\leq 2\epsilon\|\bD_1 \bB \|+\max\limits_{\bu_\epsilon, \bv_\epsilon}\bu_\epsilon^\top \bD_1 \bB \bv_\epsilon. 
\end{equation*}
Therefore, $\|\bD_1\bB \|\leq (1-2\epsilon)^{-1}\max\limits_{\bu_\epsilon, \bv_\epsilon}\bu_\epsilon^\top \bD_1 \bB \bv_\epsilon$. Let $\epsilon=1/4$. Then it suffices to give an upper bound $\max\limits_{\bu_\epsilon, \bv_\epsilon}\bu_\epsilon^\top\bD_1 \bB\bv_\epsilon$ with high probability.\\

\textbf{Step 2. Concentration. }
Let $Z_{\alpha, l}=\frac{Y_{\alpha, l} - \lambda_l X_l}{\sqrt{1-\lambda_l^2}}$ for all $1\leq \alpha \leq n$ and $1\leq l\leq p_1$. Then for $1\leq i\leq p_1-k$ and $1\leq j\leq k$
\begin{align*}
&[\bD_1 \bB ]_{i, j}
\\
&=\frac{1}{\sqrt{\lambda_{k+1}^2 - \lambda_{k+i}^2 + \frac{\delta}{2}}} \frac{1}{n}\sum\limits_{\alpha=1}^n (\lambda_j X_{\alpha, k+i} Y_{\alpha, j}- \lambda_j^2 X_{\alpha, k+i} X_{\alpha, j}+ \lambda_{k+i} Y_{\alpha, k+i} X_{\alpha, j}  - \lambda_{k+i}\lambda_j Y_{\alpha, k+i}Y_{\alpha, j})\\
&=\frac{1}{\sqrt{\lambda_{k+1}^2 - \lambda_{k+i}^2 + \frac{\delta}{2}}} \frac{1}{n}\sum\limits_{\alpha=1}^n \Big\{(1-\lambda_j^2)\lambda_{k+i}\lambda_j X_{\alpha, k+i}X_{\alpha, j} -\lambda_j^2(Y_{\alpha, k+i}-\lambda_{k+i}X_{\alpha, k+i})(Y_{\alpha, j}-\lambda_jX_{\alpha, j})\\
&~~~+(1-\lambda_j^2)\lambda_j(Y_{\alpha, k+i}-\lambda_{k+i}X_{\alpha, k+i})X_{\alpha, j}+(1-\lambda_j^2)\lambda_{k+i}(Y_{\alpha, j}-\lambda_jX_{\alpha, j})X_{\alpha, k+i}\Big\}.
\\
&=\frac{1}{\sqrt{\lambda_{k+1}^2 - \lambda_{k+i}^2 + \frac{\delta}{2}}} \frac{1}{n}\sum\limits_{\alpha=1}^n \Big\{(1-\lambda_j^2)\lambda_{k+i}\lambda_j X_{\alpha, k+i}X_{\alpha, j} \\
&~~~ -\lambda_j^2\sqrt{1-\lambda_{k+i}^2}\sqrt{1-\lambda_j^2}Z_{\alpha, k+i}Z_{\alpha, j} + (1-\lambda_j^2)\lambda_j\sqrt{1-\lambda_{k+i}^2}Z_{\alpha, k+i}X_{\alpha, j}
\\
&~~~ +(1-\lambda_j^2)\lambda_{k+i}\sqrt{1-\lambda_{k+i}^2}X_{\alpha, k+i} Z_{\alpha, j}\Big\}. 
\end{align*}

In this way, $\{X_{\alpha, k+i}, Z_{\alpha, k+i}, 1\leq i\leq p_1, 1\leq \alpha\leq n\}$ are mutually independent standard gaussian random variables. For any given pair of vectors $\bu\in\mathbb{R}^{p_1-k}, \bv\in\mathbb{R}^k$, 
\begin{align*}
\bu^\top\bD_1\bB\bv&=\frac{1}{n}\sum_{\alpha=1}^n \sum_{i=1}^{p_1 - k}\sum_{j=1}^k \frac{u_i v_j}{\sqrt{\lambda_{k+1}^2 - \lambda_{k+i}^2 + \frac{\delta}{2}}}\Big\{(1-\lambda_j^2)\lambda_{k+i}\lambda_j X_{\alpha, k+i}X_{\alpha, j}
\\
&~~~ -\lambda_j^2\sqrt{1-\lambda_{k+i}^2}\sqrt{1-\lambda_j^2}Z_{\alpha, k+i}Z_{\alpha, j}+(1-\lambda_j^2)\lambda_j\sqrt{1-\lambda_{k+i}^2}Z_{\alpha, k+i}X_{\alpha, j}
\\
&~~~ +(1-\lambda_j^2)\lambda_{k+i}\sqrt{1-\lambda_{k+i}^2} X_{\alpha, k+i} Z_{\alpha, j} \Big\}\\
&\doteq \frac{1}{n}\sum_{\alpha=1}^n \bw_{\alpha}^\top \mtx{A}_\alpha \bw_{\alpha}, 
\end{align*}
where
\[
\vct{w}_\alpha^\top = [\vct{x}_\alpha^\top, \vct{z}_\alpha^\top] = [X_{\alpha, 1}, \ldots, X_{\alpha, p_1}, Z_{\alpha, 1}, \ldots, Z_{\alpha, p_1}]
\]
and $\mtx{A}_\alpha \in \mathbb{R}^{(2p_1) \times (2p_1)}$ is symmetric and determined by the corresponding quadratic form. This yields
\begin{align*}
\|\bA_\alpha\|_F^2&= \frac{1}{2} \sum_{i=1}^{p_1 - k}\sum_{j=1}^k \frac{u_i^2 v_j^2}{\lambda_{k+1}^2 - \lambda_{k+i}^2 + \frac{\delta}{2}}\Big\{(1-\lambda_j^2)^2\lambda_{k+i}^2\lambda_j^2 +\lambda_j^4(1-\lambda_{k+i}^2)(1-\lambda_j^2)\\
&~~~+(1-\lambda_j^2)^2\lambda_j^2(1-\lambda_{k+i}^2)+(1-\lambda_j^2)^2\lambda_{k+i}^2(1-\lambda_{k+i}^2) \Big\}\\
&=\frac{1}{2} \sum_{i=1}^{p_1 - k}\sum_{j=1}^k \frac{u_i^2 v_j^2}{\lambda_{k+1}^2 - \lambda_{k+i}^2 + \frac{\delta}{2}}\left(1-\lambda_j^2\right)\left(\lambda_{k+i}^2+\lambda_j^2-2\lambda_{k+i}^2\lambda_j^2\right)
\\
&\leq \frac{1}{2} \left(\sum_{i=1}^{p_1 - k} u_i^2\right) \left(\sum_{j=1}^k v_j^2\right) \max_{\substack{1\leq i\leq p_1-k \\1\leq j\leq k}}\frac{(1-\lambda_j^2)(\lambda_{k+i}^2+\lambda_j^2-2\lambda_{k+i}^2\lambda_j^2)}{\lambda_{k+1}^2 - \lambda_{k+i}^2 + \frac{\delta}{2}} \\
&\leq \frac{1}{2} \max_{\substack{1\leq i\leq p_1-k \\1\leq j\leq k}}\frac{(1-\lambda_k^2)(2\lambda_j^2-2\lambda_{k+i}^2\lambda_j^2)}{\lambda_{k+1}^2 - \lambda_{k+i}^2 + \frac{\delta}{2}} \\
&\leq (1-\lambda_k^2)\max_{\substack{1\leq i\leq p_1-k \\1\leq j\leq k}}\frac{\lambda_j^2(1-\lambda_{k+i}^2)}{\frac{\delta}{2} +\lambda_{k+1}^2 - \lambda_{i + k}^2} \\
&\leq (1-\lambda_k^2) \max_{\substack{1\leq i\leq p_1-k \\1\leq j\leq k}} \frac{(1-\lambda_{k+1}^2)}{\frac{\delta}{2}} \\
&\leq \frac{2(1-\lambda_k^2)(1-\lambda_{k+1}^2)}{\delta}\doteq K^2, 
\end{align*} 
where the second last inequality is due to the facts that $\lambda_j \leq 1$ and  
\[
\frac{(1-\lambda_{k+i}^2)}{\frac{\delta}{2} +\lambda_{k+1}^2 - \lambda_{i + k}^2} \leq \frac{(1-\lambda_{k+1}^2)}{\frac{\delta}{2}}~~~ (\because \frac{\delta}{2} +\lambda_{k+1}^2 < \lambda_k^2 \leq 1).
\]
Moreover $\|\bA_\alpha \|_2^2\leq \|\bA_\alpha \|_F^2 \leq K^2$.\\

Now define $\vct{w}^\top := [\vct{w}_1^\top, \ldots, \vct{w}_n^\top]$ and 
\[
\mtx{A} = \begin{bmatrix} \mtx{A}_1 & ~ & ~ & ~ \\ ~ & \mtx{A}_2 & ~ & ~ \\ ~ & ~ & \ddots & ~ \\ ~ & ~ & ~ & \mtx{A}_n \end{bmatrix}.
\]
Then we have 
\[
\|\mtx{A}\| \leq \max_{1 \leq \alpha \leq n} \|\mtx{A}_\alpha\| \leq K, \quad \|\mtx{A}\|_F^2 \leq \sum_{\alpha = 1}^n \|\mtx{A}_\alpha\|_F^2 \leq nK^2
\]
and
\[
\bu^\top\bD_1\bB\bv = \frac{1}{n} \vct{w}^\top \mtx{A} \vct{w}, \text{~where~} \vct{w} \in \mathcal{N}_{2p_1n} (\vct{0}, \mtx{I}_{2p_1n}).
\]

Therefore, By the classic Hanson-Wright inequality (Lemma~\ref{HW-inequality}), there holds
\[
P\left\{ n | \bu^\top\bD_1\bB\bv |\geq t \right\}\leq 2\exp\left\{-c_0\min\left(\frac{t^2}{n K^2}, \frac{t}{K}\right)\right\} 
\]
for some numerical constant $c_0>0$. Without loss of generality, we can also assume $c_0 \leq 1$. Let $t = \frac{4}{c_0} \sqrt{n p_1} K$. By $n \geq p_1$, straightforward calculation gives
\[
P\left\{ n | \bu^\top\bD_1\bB\bv |\geq  \frac{4}{c_0} \sqrt{n p_1} K \right\}\leq 2e^{-4p_1}. 
\]

%

\textbf{Step 3. Union Bound. } By Lemma~\ref{CoverNum}, we choose $1/4$-net such that 
\begin{align*}
&P\left\{\max\limits_{\substack{\bu_\epsilon \in \mathcal{N}_\epsilon(\mathbb{S}^{p_1-k-1}) \\ \bv_\epsilon \in \mathcal{N}_\epsilon(\mathbb{S}^{k-1})}}\bu_\epsilon^\top \bD_1 \bB \bv_\epsilon \geq \left(\frac{4\sqrt{2}}{c_0}\right)\sqrt{\frac{p_1}{n}} \sqrt{\frac{(1-\lambda_k^2)(1-\lambda_{k+1}^2)}{\delta}}\right\}
\\
&\leq 9^{p_1-k}9^{k}\times 2 e^{-4p_1} \leq 2 e^{-\frac{3}{2}p_1}.
\end{align*}
In other words, with probability at least $1 - 2 e^{-\frac{3}{2}p_1}$, we have
\[
\|\bD_1\bB \|\leq (1-2\epsilon)^{-1}\max\limits_{\bu_\epsilon, \bv_\epsilon}\bu_\epsilon^\top \bD_1 \bB \bv_\epsilon \leq \left(\frac{8\sqrt{2}}{c_0}\right)\sqrt{\frac{p_1}{n}} \sqrt{\frac{(1-\lambda_k^2)(1-\lambda_{k+1}^2)}{\delta}}.
\]

In summary, we have as long as $n \geq C_0(p_1 + p_2)$, with probability $1 - c_0\exp(-\gamma p_1)$,
\begin{align*}
\|\pxhk^l\| &\leq C \left[\sqrt{\frac{p_1(1-\lambda_k^2)(1-\lambda_{k+1}^2)}{n\delta^2}} + \frac{(p_1 + p_2)}{n \Delta \delta}\right]
\\
&\leq C \left[\sqrt{\frac{p_1(1-\lambda_k^2)(1-\lambda_{k+1}^2)}{n\Delta^2}} + \frac{(p_1 + p_2)}{n \Delta^2}\right].
\end{align*}
Here the last inequality is due to $\delta = (\lambda_k + \lambda_{k+1})\Delta \geq \frac{1}{2} \Delta$. Here $C_0, C, c_0, \gamma$ are absolute constants.

\subsection*{Case 2: $\lambda_k \leq \frac{1}{2}$}~\\
By \prettyref{eq:key2}, we have
\[
\pxhk^l \bLambda_1^2-\bLambda_2^2\pxhk^l = \mtx{G} \mtx{\Lambda}_1 + \mtx{\Lambda}_2 \mtx{F},
\]
where
\[
\mtx{G} := (\widehat{\mtx{\Sigma}}_{xy}^{21} - \widehat{\mtx{\Sigma}}_{x}^{21} \mtx{\Lambda}_1) \widehat{\mtx{\Psi}}^u_{1:k} + (\sxh^{21}\mtx{R}_1 - \mtx{R}_3)
\]
and
\[
\mtx{F}:= [\mtx{I}_{p_1}, \mtx{0}_{p_1 \times (p_2 -p_1)}]\left[(\widehat{\mtx{\Sigma}}_{yx}^{21} - \widehat{\mtx{\Sigma}}_{y}^{21} \mtx{\Lambda}_1)\widehat{\mtx{\Phi}}^u_{1:k} - (\syh^{21}\mtx{R}_2+\mtx{R}_4)\right].
\]
Notice that $\widehat{\mtx{\Sigma}}_{xy}^{21}$ and $\widehat{\mtx{\Sigma}}_{x}^{21}$ are submatrices of $\sh_{2p_1}$. By \prettyref{lem:estimation_0}, we have
\[
\|\widehat{\mtx{\Sigma}}_{xy}^{21} - \widehat{\mtx{\Sigma}}_{x}^{21} \mtx{\Lambda}_1\| \leq C\sqrt{\frac{p_1}{n}}.
\]
Moreover, by $\|\mtx{R}_1\| \leq C\sqrt{\frac{p_1 + p_2}{n}}$, $\|\mtx{R}_3\| \leq C\frac{p_1 + p_2}{n \Delta}$ and \prettyref{lem:estimation}, there holds 
\[
\|\mtx{G}\| \leq C\left(\sqrt{\frac{p_1}{n}} + \frac{p_1 + p_2}{n \Delta}\right).
\]
Similarly, $[\mtx{I}_{p_1}, \mtx{0}_{p_1 \times (p_2 -p_1)}] \widehat{\mtx{\Sigma}}_{yx}^{21}$ and $[\mtx{I}_{p_1}, \mtx{0}_{p_1 \times (p_2 -p_1)}] \widehat{\mtx{\Sigma}}_{x}^{21}$ are submatrices of $\sh_{2p_1}$. By a similar argument, 
\[
\|\mtx{F}\| \leq C\left(\sqrt{\frac{p_1}{n}} + \frac{p_1 + p_2}{n \Delta}\right).
\]

Then
\[
\pxhk^l = \left[\frac{\lambda_j}{\lambda_{k+i} + \lambda_j}\right] \circ \left[\frac{1}{\lambda_j - \lambda_{k+i}}\right] \circ \mtx{G} + \left[\frac{\lambda_{k+i}}{\lambda_{k+i} + \lambda_j}\right] \circ \left[\frac{1}{\lambda_j - \lambda_{k+i}}\right] \circ \mtx{F}
\]
Here $1\leq i\leq p_1-k$ and $1\leq j\leq k$. By \prettyref{lem:hada-norm}, there holds for any $\mtx{X}$,
\[
\left\| \left[\frac{\lambda_j}{\lambda_{k+i} + \lambda_j}\right] \mtx{X}\right\| = \left\| \left[\frac{\max(\lambda_{k+i}, \lambda_j)}{\lambda_{k+i} + \lambda_j}\right] \mtx{X}\right\| \leq \frac{3}{2}\|\mtx{X}\|
\]
and
\[
\left\| \left[\frac{\lambda_{k+i}}{\lambda_{k+i} + \lambda_j}\right] \mtx{X}\right\| = \left\| \left[\frac{\min(\lambda_{k+i}, \lambda_j)}{\lambda_{k+i} + \lambda_j}\right] \mtx{X}\right\| \leq \frac{1}{2}\|\mtx{X}\|.
\]
Finally, for any $\mtx{X}$, 
\[
\left[\frac{1}{\lambda_j - \lambda_{k+i}}\right] \mtx{X}= \bA \circ (\mtx{D}_1 \mtx{X} \mtx{D}_2)
\]
where
\[
\bA:= \left[\frac{\sqrt{\lambda_j - \lambda_k + \frac{\Delta}{2}} \sqrt{\lambda_{k+1} - \lambda_{k+i} + \frac{\Delta}{2}}}{\lambda_j - \lambda_{k+i}}\right],
\]
\[
\bD_1=\text{diag}\left(\frac{1}{\sqrt{\frac{\Delta}{2}}}, \cdots, \frac{1}{\sqrt{\lambda_{k+1} - \lambda_{p_1} + \frac{\Delta}{2}}}\right), 
\]
and
\[
\bD_2=\text{diag}\left(\frac{1}{\sqrt{\lambda_1 - \lambda_k + \frac{\Delta}{2}}}, \cdots, \frac{1}{\sqrt{\frac{\Delta}{2}}}\right). 
\]
Since $\|\bD_1\|, \|\bD_2\| \leq \sqrt{\frac{2}{\Delta}}$, by \prettyref{lem:hada-norm},
\[
\left\| \left[\frac{1}{\lambda_j - \lambda_{k+i}}\right] \mtx{X}\right\| \leq \frac{1}{2} \|\mtx{D}_1 \mtx{X} \mtx{D}_2\| \leq \frac{1}{\Delta} \|\mtx{X}\|.
\]
In summary, we have
\[
\|\pxhk^l\| \leq C\left(\sqrt{\frac{p_1}{n \Delta^2}} + \frac{p_1 + p_2}{n \Delta^2}\right).
\]
Since $\frac{1}{2} \geq \lambda_k \geq \lambda_{k+1}$, there holds
\[
\|\pxhk^l\| \leq C \left[\sqrt{\frac{p_1(1-\lambda_k^2)(1-\lambda_{k+1}^2)}{n\Delta^2}} + \frac{(p_1 + p_2)}{n \Delta^2}\right].
\]

\strangesection{6}{Lower Bound: Proof of Theorem~\ref{thm:lower}}
\label{sec:lower-proof}
\label{sec::lower-proof}
To establish the minimax lower bounds of CCA estimates for our proposed losses, we follow the analytical frameworks in the literature of PCA and CCA, e.g., \cite{vu2013minimax, cai2013sparse, gao2015minimax}, where the calculation is focused on the construction of the hypothesis class to which the packing lemma and Fano's inequality are applied. However, since we fix both $\lambda_k$ and $\lambda_{k+1}$ in the localized parameter spaces, new technical challenges arise and consequently we construct hypothesis classes based on the equality \eqref{eq::kl-construction}. In this section we also denote $\Delta := \lambda_k - \lambda_{k+1}$.


\subsection{On Kullback-Leibler Divergence} 
The following lemma can be viewed as an extension of Lemma 14 in \cite{gao2015minimax} from $\lambda_{k+1} = 0$ to arbitrary $\lambda_{k+1}$. The proof of the lemma can be found in Section~\ref{sec::KL}. 
\begin{lemma}
For $i=1, 2$ and $p_2\geq p_1\geq k$, let $\begin{bmatrix}
\bU_{(i)}, ~\bW_{(i)}
\end{bmatrix}\in \mc{O}(p_1, p_1), \begin{bmatrix}
\bV_{(i)}, ~\bZ_{(i)}
\end{bmatrix}\in \mc{O}(p_2, p_1)$ where $\bU_{(i)}\in\mathbb{R}^{p_1\times k}, \bV_{(i)}\in\mathbb{R}^{p_2\times k}$. For  $0\leq \lambda_2<\lambda_1<1$, let $\Delta=\lambda_1-\lambda_2$ and define
\[
\s_{(i)}=\begin{bmatrix} \sx & \sx^{1/2}(\lambda_1\bU_{(i)}\bV_{(i)}^\top+\lambda_2\bW_{(i)}\bZ_{(i)}^\top)\sy^{1/2}\\ \sy^{1/2}(\lambda_1\bV_{(i)}\bU_{(i)}^\top+\lambda_2\bZ_{(i)}\bW_{(i)}^\top) \sx^{1/2}& \sy \end{bmatrix}~i=1,2,
\]
 Let $\P_{(i)}$ denote the distribution of a random $i. i. d. $ sample of size $n$ from $N(0, \s_{(i)})$. If we further assume 
 \begin{align}
 [\bU_{(1)}, \bW_{(1)}]\begin{bmatrix}
\smallskip
\bV_{(1)}^\top\\
 \bZ_{(1)}^\top
\end{bmatrix}=[\bU_{(2)}, \bW_{(2)}]\begin{bmatrix}
\smallskip
\bV_{(2)}^\top\\
 \bZ_{(2)}^\top
\end{bmatrix},
\label{eq::kl-construction}
 \end{align}
Then one can show that
\[
D(\P_{(1)}||\P_{(2)})=\frac{n\Delta^2(1+\lambda_1\lambda_2)}{2(1-\lambda_1^2)(1-\lambda_2^2)}\|\bU_{(1)}\bV_{(1)}^\top-\bU_{(2)}\bV_{(2)}^\top\|_F^2.
\]
\label{KL-distance}
\end{lemma}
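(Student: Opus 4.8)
The plan is to push everything through the closed form for the Kullback--Leibler divergence between two centered Gaussians and then exploit the common eigen-structure of the two cross-covariances. Since $\P_{(i)}$ is the law of $n$ i.i.d.\ draws from $N(\0,\s_{(i)})$, tensorization gives $D(\P_{(1)}||\P_{(2)})=\tfrac{n}{2}\big[\tr(\s_{(2)}^{-1}\s_{(1)})-(p_1+p_2)+\log\det\s_{(2)}-\log\det\s_{(1)}\big]$. Both $\s_{(i)}$ carry the same blocks $\sx,\sy$, so conjugating by $\mathrm{diag}(\sx^{-1/2},\sy^{-1/2})$ — an invertible change of coordinates, under which KL divergence is invariant — reduces matters to $\sx=\bI_{p_1}$, $\sy=\bI_{p_2}$, $\s_{(i)}=\left[\begin{smallmatrix}\bI_{p_1}&\bT_{(i)}\\ \bT_{(i)}^\top&\bI_{p_2}\end{smallmatrix}\right]$ with $\bT_{(i)}=\bO_{(i)}\bLambda_0\bP_{(i)}^\top$, where $\bO_{(i)}=[\bU_{(i)},\bW_{(i)}]\in\mc{O}(p_1)$, $\bP_{(i)}=[\bV_{(i)},\bZ_{(i)}]\in\mc{O}(p_2,p_1)$, and $\bLambda_0=\mathrm{diag}(\lambda_1\bI_k,\lambda_2\bI_{p_1-k})$. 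Note $\bP_{(i)}^\top\bP_{(i)}=\bI_{p_1}$, hence $\bT_{(i)}\bT_{(i)}^\top=\bO_{(i)}\bLambda_0^2\bO_{(i)}^\top$.

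First I would dispose of the log-determinant terms: taking the Schur complement of the lower-right block, $\det\s_{(i)}=\det(\bI_{p_1}-\bT_{(i)}\bT_{(i)}^\top)=\det(\bI_{p_1}-\bLambda_0^2)=(1-\lambda_1^2)^{k}(1-\lambda_2^2)^{p_1-k}$, which does not depend on $i$, so that difference vanishes. Next, block-inverting $\s_{(2)}$ with Schur complement $\bS:=\bO_{(2)}(\bI_{p_1}-\bLambda_0^2)\bO_{(2)}^\top$ and expanding $\tr(\s_{(2)}^{-1}\s_{(1)})$ into its four blocks, the two cross-block contributions coincide by cyclicity, leaving
\[
\tr(\s_{(2)}^{-1}\s_{(1)})-(p_1+p_2)=\tr\!\big(\bS^{-1}(\bI_{p_1}+\bT_{(2)}\bT_{(2)}^\top)\big)-p_1-2\tr\!\big(\bS^{-1}\bT_{(1)}\bT_{(2)}^\top\big).
\]
The first term equals $\tr\big((\bI_{p_1}-\bLambda_0^2)^{-1}(\bI_{p_1}+\bLambda_0^2)\big)=\tfrac{k(1+\lambda_1^2)}{1-\lambda_1^2}+\tfrac{(p_1-k)(1+\lambda_2^2)}{1-\lambda_2^2}$, immediate from the eigen-structure.

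The crux is the cross term, and this is where the hypothesis \eqref{eq::kl-construction}, namely $\bO_{(1)}\bP_{(1)}^\top=\bO_{(2)}\bP_{(2)}^\top$, enters. Set $\bR:=\bO_{(2)}^\top\bO_{(1)}\in\mc{O}(p_1)$; multiplying the constraint on the left by $\bO_{(2)}^\top$ and using $\bP_{(2)}^\top\bP_{(2)}=\bI_{p_1}$ gives $\bP_{(1)}=\bP_{(2)}\bR$, hence $\bO_{(2)}^\top\bT_{(1)}\bP_{(2)}=\bR\bLambda_0\bR^\top$ and therefore $\tr(\bS^{-1}\bT_{(1)}\bT_{(2)}^\top)=\tr\big((\bI_{p_1}-\bLambda_0^2)^{-1}\bR\bLambda_0\bR^\top\bLambda_0\big)$. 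Now write $\bLambda_0=\lambda_2\bI_{p_1}+\Delta\bE$ and $(\bI_{p_1}-\bLambda_0^2)^{-1}=a\bI_{p_1}+b\bE$, where $\bE:=\bI_{p_1,k}\bI_{p_1,k}^\top$, $a=(1-\lambda_2^2)^{-1}$, $b=\tfrac{\lambda_1^2-\lambda_2^2}{(1-\lambda_1^2)(1-\lambda_2^2)}$, and put $\bF:=\bR\bE\bR^\top$; since $\bE,\bF$ are rank-$k$ orthogonal projections, every trace collapses to $p_1$, $\tr\bE=\tr\bF=k$, and $c:=\tr(\bF\bE)$. In parallel, $\bU_{(1)}\bV_{(1)}^\top-\bU_{(2)}\bV_{(2)}^\top=\bO_{(1)}\bE\bP_{(1)}^\top-\bO_{(2)}\bE\bP_{(2)}^\top=\bO_{(2)}(\bF-\bE)\bP_{(2)}^\top$, so $\|\bU_{(1)}\bV_{(1)}^\top-\bU_{(2)}\bV_{(2)}^\top\|_F^2=\|\bF-\bE\|_F^2=2(k-c)$.

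Finally I would assemble the pieces: substituting the above into the Gaussian KL formula, the $p_1$-dependent and $c$-free parts must cancel — a cancellation I would double-check by the sanity case $\bR=\bI_{p_1}$, where $\s_{(1)}=\s_{(2)}$ forces $D=0$ — and the coefficient of $(k-c)$ simplifies (using $a+b=(1-\lambda_1^2)^{-1}$ and $\lambda_1^2-\lambda_2^2=\Delta(\lambda_1+\lambda_2)$) to $\tfrac{\Delta^2(1+\lambda_1\lambda_2)}{(1-\lambda_1^2)(1-\lambda_2^2)}$, whence $D(\P_{(1)}||\P_{(2)})=\tfrac{n}{2}\cdot\tfrac{\Delta^2(1+\lambda_1\lambda_2)}{(1-\lambda_1^2)(1-\lambda_2^2)}\cdot 2(k-c)=\tfrac{n\Delta^2(1+\lambda_1\lambda_2)}{2(1-\lambda_1^2)(1-\lambda_2^2)}\|\bU_{(1)}\bV_{(1)}^\top-\bU_{(2)}\bV_{(2)}^\top\|_F^2$. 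The main obstacle I anticipate is the constraint manipulation that yields the clean identity $\bO_{(2)}^\top\bT_{(1)}\bP_{(2)}=\bR\bLambda_0\bR^\top$; once that is in hand, the rest is routine bookkeeping with $\bE$ and $\bF$.
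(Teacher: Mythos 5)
Your proposal is correct and arrives at the same answer, but takes a noticeably different route through the algebra. Both proofs open with the same two moves: the Gaussian KL formula $D=\tfrac{n}{2}[\tr(\s_{(2)}^{-1}\s_{(1)})-(p_1+p_2)-\log\det(\s_{(2)}^{-1}\s_{(1)})]$, and the reduction to $\sx=\bI_{p_1},\sy=\bI_{p_2}$ by conjugating out the marginal covariances. From there you diverge. For the log-determinant, the paper explicitly writes out an eigendecomposition of $\bOmega_{(i)}$ (a rank-$k$ plus rank-$(p_1-k)$ decomposition showing the spectrum is $\{1\pm\lambda_1,1\pm\lambda_2,1\}$ independent of the orthogonal factors), whereas you use the Schur-complement determinant identity $\det\s_{(i)}=\det(\bI-\bT_{(i)}\bT_{(i)}^\top)=\det(\bI-\bLambda_0^2)$, which is slicker. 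For the trace term, the paper block-inverts $\bOmega_{(2)}$ and then grinds through each of the eight cross products inside $\tr(\bJ_{11})$ and $\tr(\bJ_{22})$, repeatedly invoking the constraint \eqref{eq::kl-construction} to convert each to $\tfrac12\|\bU_{(1)}\bV_{(1)}^\top-\bU_{(2)}\bV_{(2)}^\top\|_F^2$. You instead extract the single rotation $\bR=\bO_{(2)}^\top\bO_{(1)}$ from \eqref{eq::kl-construction}, reduce everything to the two rank-$k$ projections $\bE$ and $\bF=\bR\bE\bR^\top$, and observe that the full expression is an affine function of $c=\tr(\bE\bF)$ that vanishes at $c=k$; this is not merely a sanity check but a rigorous shortcut, since the dependence on $\bR$ is indeed only through $c$ and the $c=k$ case is forced to be zero. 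The net effect is the same closed form, but your version isolates the structural reason behind the answer (a single invariant $\tr(\bE\bF)$ of the two projections) more cleanly, at the cost of having to justify that the $c$-free coefficient cancels rather than computing it outright. One small caution: you overload $\bP_{(i)}$ for the orthogonal frame $[\bV_{(i)},\bZ_{(i)}]$, which collides with the paper's use of $\bP_{(\cdot)}$ for projection matrices; worth renaming if this were to be merged into the text.
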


\begin{remark}
	The conditon in \eqref{eq::kl-construction} is crucial for obtaining the eigen-gap factor $1/\Delta^2$ in the lower bound and is the key insight behind the construction of the hypothesis class in the proof. \cite{gao2015minimax} has a similar lemma but only deals with the case that the residual canonical correlations are zero.  To the best of our knowledge, the proof techniques in \cite{gao2015minimax,gao2014sparse} cannot be directly used to obtain our results. 
\end{remark}


\subsection{Packing Number and Fano's Lemma}
The following result on the packing number is based on the metric entropy of the Grassmannian manifold $G(k,r)$ due to \cite{szarek1982nets}. We use the version adapted from Lemma 1 of \cite{cai2013sparse} which is also used in \cite{gao2015minimax}. 
\begin{lemma}
For any fixed $\bU_0\in \mc{O}(p, k)$ and $\mc{B}_{\epsilon_0}=\{\bU\in \mc{O}(p, k): \|\bU\bU^\top-\bU_0\bU_0^\top\|_F\leq \epsilon_0 \}$ with $\epsilon_0\in (0, \sqrt{2[k\wedge (p-k)]}\,)$. Define the semi-metric $\rho(\cdot, \cdot)$ on $\mc{B}_{\epsilon_0}$ by 
\[
\rho(\bU_1, \bU_2)=\|\bU_1\bU_1^\top - \bU_2\bU_2^\top\|_F.
\]
Then there exists universal constant $C$ such that for any $\alpha\in (0, 1)$, the packing number $\mathcal{M}(\mc{B}_{\epsilon_0}, \rho, \alpha\epsilon_0)$ satisfies
\[
\mathcal{M}(\mc{B}_{\epsilon_0}, \rho, \alpha\epsilon_0)\geq \left(\frac{1}{C\alpha}\right)^{k(p-k)}.
\]
\label{lemma-packing-szarek}
\end{lemma}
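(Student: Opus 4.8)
The plan is to recognize the statement as a local, ``ball-version'' of Szarek's metric-entropy estimate for the Grassmann manifold, and to derive it from that estimate (exactly in the spirit of Lemma~1 of \cite{cai2013sparse}) together with the homogeneity of the Grassmannian under the orthogonal group. First I would set up the geometry: the map $\bU\mapsto\bU\bU^\top$ sends $\mc{O}(p,k)$ onto the set of rank-$k$ orthogonal projectors, i.e.\ the Grassmann manifold $G(p,k)$ of $k$-dimensional subspaces of $\mathbb{R}^p$, a smooth manifold of dimension $d:=k(p-k)$, and the semi-metric $\rho$ is precisely the chordal (Frobenius) distance between projectors, which is globally bi-Lipschitz equivalent (with universal constants) to the geodesic distance on $G(p,k)$. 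Since $\max\rho=\sqrt{2[k\wedge(p-k)]}$ (at most $k\wedge(p-k)$ principal angles can equal $\pi/2$), the hypothesis $\epsilon_0<\sqrt{2[k\wedge(p-k)]}$ guarantees that $\mc{B}_{\epsilon_0}$ is a proper geodesic ball, which is the regime in which the argument applies.

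The core step is to reduce to a single scale. Because $O(p)$ acts transitively and by $\rho$-isometries on $G(p,k)$, the packing number $\mathcal{M}(\mc{B}_{\epsilon_0},\rho,\alpha\epsilon_0)$ does not depend on the center $\bU_0$, and — up to universal constants coming from uniform curvature/injectivity bounds on the admissible range of radii — it behaves like the packing number of a Euclidean ball of radius $\epsilon_0$ in $\mathbb{R}^d$ at resolution $\alpha\epsilon_0$. It therefore suffices to handle $\epsilon_0$ equal to a fixed small constant $\epsilon_\ast$. For that case I would use the ``graph'' chart: taking $\bU_0=\bI_{p,k}$ without loss of generality, every subspace near $\bU_0$'s column space is the column space of $\bU_L=\begin{bmatrix}(\bI_k+L^\top L)^{-1/2}\\ L(\bI_k+L^\top L)^{-1/2}\end{bmatrix}$ for a unique $L\in\mathbb{R}^{(p-k)\times k}\cong\mathbb{R}^d$, and a short computation gives $\bU_L\bU_L^\top-\bU_0\bU_0^\top=\begin{bmatrix}(\bI_k+L^\top L)^{-1}-\bI_k & (\bI_k+L^\top L)^{-1}L^\top\\ L(\bI_k+L^\top L)^{-1} & L(\bI_k+L^\top L)^{-1}L^\top\end{bmatrix}$, from which $\rho(\bU_{L_1},\bU_{L_2})\asymp\|L_1-L_2\|_F$ uniformly over $L_1,L_2$ in a fixed Euclidean ball. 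Hence a $\rho$-ball $\mc{B}_{\epsilon_\ast}$ contains the $\bU_L$-image of a Euclidean ball of radius $c\epsilon_\ast$ in $\mathbb{R}^d$, and any $(\tilde c\alpha\epsilon_\ast)$-separated subset of that Euclidean ball maps to an $(\alpha\epsilon_\ast)$-separated subset of $\mc{B}_{\epsilon_\ast}$.

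Finally I would invoke the standard volumetric lower bound: a Euclidean ball of radius $r$ in $\mathbb{R}^d$ contains a $\delta$-separated set of cardinality at least $(r/\delta)^d$. Taking $r\asymp\epsilon_0$, $\delta\asymp\alpha\epsilon_0$ and $d=k(p-k)$ yields $\mathcal{M}(\mc{B}_{\epsilon_0},\rho,\alpha\epsilon_0)\ge(1/(C\alpha))^{k(p-k)}$, which is the asserted bound.

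The main obstacle is the reduction step: one must know that all the implicit constants — the bi-Lipschitz constants of the graph chart, and (more importantly) the claim that $\mathcal{M}(\mc{B}_{\epsilon_0},\rho,\alpha\epsilon_0)$ is essentially independent of $\epsilon_0$ over the \emph{entire} admissible range $0<\epsilon_0<\sqrt{2[k\wedge(p-k)]}$, not just for small $\epsilon_0$ — can be chosen universal, i.e.\ independent of $p$, $k$, and $\epsilon_0$. This uniformity over the whole range of radii and all $(p,k)$ is exactly what Szarek's nets of the Grassmannian provide, so in the write-up I would simply cite Szarek's theorem through Lemma~1 of \cite{cai2013sparse} rather than re-derive the uniform estimates by hand.
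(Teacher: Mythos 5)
The paper does not actually prove this lemma: the text immediately preceding it states that the result is ``based on the metric entropy of the Grassmannian manifold \ldots due to \cite{szarek1982nets}'' and that the authors ``use the version adapted from Lemma~1 of \cite{cai2013sparse} which is also used in \cite{gao2015minimax}''; the lemma is then stated without proof and the paper moves directly to Corollary~\ref{corollary-packing}. Your sketch of the underlying Grassmannian geometry (graph chart, bi-Lipschitz equivalence of $\rho$ to the local Euclidean metric, volumetric packing bound) is a reasonable heuristic for why the bound holds, and you correctly flag that the genuinely nontrivial content is the uniformity of the constant $C$ over the whole admissible range $\epsilon_0\in(0,\sqrt{2[k\wedge(p-k)]}\,)$ and over all $(p,k)$ --- the local chart argument alone does not deliver this at scales comparable to the diameter. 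Since you then conclude by saying you would simply cite Szarek's theorem (via Lemma~1 of \cite{cai2013sparse}) to supply exactly that uniformity, your proposal lands on the same move the paper makes: import the statement from the cited literature rather than re-derive it.
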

The following corollary is used to prove the lower bound. 
\begin{corollary}
If we change the set in Lemma~\ref{lemma-packing-szarek} to $\wt{\mc{B}}_{\epsilon_0}=\{\bU\in \mc{O}(p, k): \|\bU-\bU_0\|_F\leq \epsilon_0 \}$, then we still have 
\[
\mathcal{M}(\wt{\mc{B}}_{\epsilon_0}, \rho, \alpha\epsilon_0)\geq \left(\frac{1}{C\alpha}\right)^{k(p-k)}.
\]
\label{corollary-packing}
\end{corollary}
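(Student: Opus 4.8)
The plan is to transport the packing set furnished by Lemma~\ref{lemma-packing-szarek} into $\wt{\mc{B}}_{\epsilon_0}$ without altering any pairwise $\rho$-distance, by right-multiplying each element of the packing by a well-chosen $k\times k$ orthogonal matrix. The key observation is that $\rho(\bU_1,\bU_2)=\|\bU_1\bU_1^\top-\bU_2\bU_2^\top\|_F$ depends only on the column spaces of $\bU_1$ and $\bU_2$, hence is invariant under $\bU_i\mapsto\bU_i\bQ$ with $\bQ\in\mc{O}(k,k)$, whereas such a rotation can be used to move $\bU_i$ close to $\bU_0$ in the non-invariant quantity $\|\cdot-\bU_0\|_F$ that defines $\wt{\mc B}_{\epsilon_0}$.

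First I would record the elementary fact that for any $\bU,\bU_0\in\mc{O}(p,k)$,
\[
\min_{\bQ\in\mc{O}(k,k)}\|\bU-\bU_0\bQ\|_F\ \le\ \|\bU\bU^\top-\bU_0\bU_0^\top\|_F.
\]
This follows from the principal-angle representation used already in Section~\ref{section-angle}: if $\theta_1,\dots,\theta_k$ are the principal angles between the two column spaces, a direct trace computation gives $\|\bU\bU^\top-\bU_0\bU_0^\top\|_F^2=2\sum_i\sin^2\theta_i$, while a standard Procrustes/SVD computation on $\bU_0^\top\bU$ gives $\min_{\bQ}\|\bU-\bU_0\bQ\|_F^2=2\sum_i(1-\cos\theta_i)$; since $\cos\theta_i\ge 0$ we have $\sin^2\theta_i=(1-\cos\theta_i)(1+\cos\theta_i)\ge 1-\cos\theta_i$, and summing yields the claim.

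Then the construction is immediate. Apply Lemma~\ref{lemma-packing-szarek} with the \emph{same} $\epsilon_0$ (admissible, since $\epsilon_0\in(0,\sqrt{2[k\wedge(p-k)]})$ is assumed here as well) to get points $\bU_1,\dots,\bU_M\in\mc{B}_{\epsilon_0}$ with $\rho(\bU_i,\bU_j)\ge\alpha\epsilon_0$ for all $i\neq j$ and $M\ge(1/(C\alpha))^{k(p-k)}$. For each $i$ choose $\bQ_i\in\mc{O}(k,k)$ attaining the minimum above and set $\bV_i:=\bU_i\bQ_i^\top$. Then $\bV_i\in\mc{O}(p,k)$ (right multiplication by an orthogonal matrix preserves orthonormal columns), $\bV_i\bV_i^\top=\bU_i\bU_i^\top$, and
\[
\|\bV_i-\bU_0\|_F=\|(\bU_i-\bU_0\bQ_i)\bQ_i^\top\|_F=\|\bU_i-\bU_0\bQ_i\|_F\le\|\bU_i\bU_i^\top-\bU_0\bU_0^\top\|_F\le\epsilon_0,
\]
so $\bV_i\in\wt{\mc{B}}_{\epsilon_0}$; moreover $\rho(\bV_i,\bV_j)=\|\bU_i\bU_i^\top-\bU_j\bU_j^\top\|_F=\rho(\bU_i,\bU_j)\ge\alpha\epsilon_0$ for $i\neq j$. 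Hence $\{\bV_1,\dots,\bV_M\}$ is an $\alpha\epsilon_0$-packing of $\wt{\mc{B}}_{\epsilon_0}$ under $\rho$, giving $\mathcal{M}(\wt{\mc{B}}_{\epsilon_0},\rho,\alpha\epsilon_0)\ge M\ge(1/(C\alpha))^{k(p-k)}$.

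There is essentially no obstacle here: the only non-formal ingredient is the displayed inequality relating $\min_{\bQ}\|\bU-\bU_0\bQ\|_F$ to $\|\bU\bU^\top-\bU_0\bU_0^\top\|_F$, which is classical, and everything else is bookkeeping with the orthogonal invariance of the Frobenius norm. If one prefers to avoid the principal-angle computation, the same inequality can be obtained directly from $\|\bU-\bU_0\bQ\|_F^2=2k-2\tr(\bQ^\top\bU_0^\top\bU)$ together with $\|\bU\bU^\top-\bU_0\bU_0^\top\|_F^2=2k-2\|\bU_0^\top\bU\|_F^2$ and the inequality $\|\bU_0^\top\bU\|_F^2\le\tr(\bQ^\top\bU_0^\top\bU)$ for the optimal $\bQ$ (each singular value $s$ of $\bU_0^\top\bU$ lies in $[0,1]$, so $s^2\le s$).
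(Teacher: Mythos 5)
Your proof is correct and follows essentially the same approach as the paper: the paper likewise rotates each packing point $\bU_i$ by the Procrustes-optimal orthogonal matrix to land it in $\wt{\mc B}_{\epsilon_0}$, citing its Lemma~\ref{subspace-metric} (which is exactly your displayed Procrustes inequality, proved there by the same trace/singular-value computation you give as your alternative derivation). The only cosmetic difference is that you re-derive that inequality inline rather than citing it.
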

\begin{proof}
Apply Lemma~\ref{lemma-packing-szarek} to $\mc{B}_{\epsilon_0}$, there exists $\bU_1, \cdots, \bU_{n}$ with $n\geq \left(1/C\alpha\right)^{k(p-k)}$ such that 
\[
\|\bU_{i}\bU_{i}^\top-\bU_0\bU_0^\top\|_F\leq \epsilon_0,~ 1\leq i\leq n, ~
\|\bU_{i}\bU_{i}^\top-\bU_j\bU_j^\top\|_F\geq \alpha\epsilon_0, 1\leq i\leq j\leq n. 
\]
Define $\wt{\bU}_i=\arg\min\limits_{\bU\in\{\bU_i\bQ, ~\bQ\in\mc{O}(k)\}} \|\bU-\bU_0\|_F$, by Lemma~\ref{subspace-metric}, 
\[
\|\wt{\bU}_i-\bU_0\|_F\leq \|\wt{\bU}_i\wt{\bU}_i^\top-\bU_0\bU_0^\top\|_F\leq \epsilon_0.
\]
Therefore, $\wt{\bU}_1, \cdots, \wt{\bU}_{n}\in\wt{\mc{B}}_{\epsilon_0}$ and 
\[
\|\wt{\bU}_i\wt{\bU}_i^\top-\wt{\bU}_j\wt{\bU}_j^\top\|_F=\|\bU_{i}\bU_{i}^\top-\bU_j\bU_j^\top\|_F\geq \alpha\epsilon_0.
\]
which implies, 
\[
\mathcal{M}(\wt{\mc{B}}_{\epsilon_0}, \rho, \alpha\epsilon_0)\geq n\geq \left(\frac{1}{C\alpha}\right)^{k(p-k)}. 
\]
\end{proof}

\begin{lemma}
For any matrices $\bU_1, \bU_2\in \mc{O}(p, k)$,
\begin{equation}
\inf\limits_{\bQ\in \mc{O}(k, k)}\|\bU_1-\bU_2\bQ\|_F\leq \|\bP_{\bU_1}-\bP_{\bU_2}\|_F
\nonumber
\end{equation}
\label{subspace-metric}
\end{lemma}

\begin{proof} By definition
	\begin{align*}
\|\bU_1-\bU_2\bQ\|_F^2=2k-2tr(\bU_1^\top\bU_2\bQ)
\end{align*}
Let $\bU_1^\top\bU_2=\bU\bD\bV^\top$ be the singular value decomposition. Then $\bV\bU^\top\in O(k, k)$ and 
\begin{align*}
\inf\limits_{\bQ\in O(k, k)}\|\bU_1-\bU_2\bQ\|_F^2&\leq 2k-2tr(\bU_1^\top\bU_2\bV\bU^\top)\\\
&=2k-2tr(\bU\bD\bU^\top)\\
&=2k-2tr(\bD).
\end{align*}
On the other hand, 
\begin{align*}
\|\bP_{\bU_1}-\bP_{\bU_2}\|_F^2&=\|\bU_1\bU_1^\top-\bU_2\bU_2^\top\|_F^2\\
&=2k-2tr(\bU_1\bU_1^\top\bU_2\bU_2^\top)\\
&=2k-2tr(\bU_1^\top\bU_2\bU_2^\top\bU_1)\\
&=2k-2tr(\bD^2).
\end{align*}
Since $\bU_1, \bU_2\in O(p, k)$, $\|\bU_1^\top\bU_2\|\leq 1$ and therefore all the diagonal elements of $\bD$ is less than 1, which implies that $tr(\bD)\geq tr(\bD^2)$ and
\begin{equation}
\inf\limits_{\bQ\in O(k, k)}\|\bU_1-\bU_2\bQ\|_F^2\leq \|\bP_{\bU_1}-\bP_{\bU_2}\|_F^2.
\nonumber
\end{equation}
\end{proof}

\begin{lemma}[Fano's Lemma \cite{yu1997assouad}] Let $(\Theta, \rho)$ be a (semi)metric space and $\{\P_{\theta}: \theta\in\Theta\}$ a collection of probability measures. For any totally bounded $T\subset\Theta$, denote $\mathcal{M}(T, \rho, \epsilon)$ the $\epsilon$-packing number of $T$ with respect to the metric $\rho$, i.e. , the maximal number of points in $T$ whoese pairwise minimum distance in $\rho$ is at least $\epsilon$. Define the Kullback-Leibler diameter of $T$ by 
\[
d_{\text{KL}}(T)=\sup\limits_{\theta, \theta^\prime\in T} D(\P_{\theta}||\P_{\theta^\prime}). 
\]
Then, 
\[
\inf\limits_{\htheta}\sup\limits_{\theta\in\Theta} \E_{\theta} \left[\rho^2 (\htheta, \theta) \right]\geq \sup\limits_{T\subset\Theta}\sup\limits_{\epsilon>0} \frac{\epsilon^2}{4}\Big(1-\frac{d_{\text{KL}}(T)+\text{log }2}{\text{log } \mathcal{M}(T, \rho, \epsilon)}\Big)
\]
\label{Lemma-exp-Fano}
\end{lemma}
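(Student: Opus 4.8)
The plan is to run the classical ``Fano method'': reduce minimax estimation over $\Theta$ to a finite multiple-hypothesis testing problem inside $T$, and then apply Fano's inequality. Fix an arbitrary totally bounded $T\subset\Theta$ and $\epsilon>0$ and set $M=\mathcal{M}(T,\rho,\epsilon)$; the asserted bound is vacuous unless $M\geq 2$ (its right-hand side is then non-positive), so assume $M\geq 2$ and fix a maximal $\epsilon$-packing $\theta_1,\ldots,\theta_M\in T$, so that $\rho(\theta_i,\theta_j)\geq\epsilon$ whenever $i\neq j$.

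First I would pass from estimation to testing. Given any estimator $\htheta$, define the test $\hpsi\in\argmin_{1\leq j\leq M}\rho(\htheta,\theta_j)$ (ties broken arbitrarily). If the data are generated from $\P_{\theta_j}$ but $\hpsi\neq j$, then the minimality of $\hpsi$, the triangle inequality, and the packing property give $\epsilon\leq\rho(\theta_{\hpsi},\theta_j)\leq\rho(\theta_{\hpsi},\htheta)+\rho(\htheta,\theta_j)\leq 2\rho(\htheta,\theta_j)$, hence $\rho^2(\htheta,\theta_j)\geq\epsilon^2/4$ on $\{\hpsi\neq j\}$. Markov's inequality then gives $\E_{\theta_j}[\rho^2(\htheta,\theta_j)]\geq\frac{\epsilon^2}{4}\P_{\theta_j}(\hpsi\neq j)$ for every $j$, and averaging over $j$ yields
\[
\sup_{\theta\in\Theta}\E_{\theta}\bigl[\rho^2(\htheta,\theta)\bigr]\;\geq\;\max_{1\leq j\leq M}\E_{\theta_j}\bigl[\rho^2(\htheta,\theta_j)\bigr]\;\geq\;\frac{\epsilon^2}{4}\cdot\frac{1}{M}\sum_{j=1}^M\P_{\theta_j}(\hpsi\neq j).
\]

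Next I would lower bound the average error probability. Placing the uniform prior on $\{1,\ldots,M\}$ and letting the sample $X$ have conditional law $\P_{\theta_j}$ given the index $J=j$, the classical Fano inequality gives $\frac{1}{M}\sum_j\P_{\theta_j}(\hpsi\neq j)\geq 1-\frac{I(J;X)+\log 2}{\log M}$. For the mutual information, writing $\bar{\P}=\frac{1}{M}\sum_i\P_{\theta_i}$ for the mixture and using convexity of $D(\P\,\|\,\cdot)$,
\[
I(J;X)=\frac{1}{M}\sum_{j=1}^M D(\P_{\theta_j}\,\|\,\bar{\P})\;\leq\;\frac{1}{M^2}\sum_{i,j} D(\P_{\theta_j}\,\|\,\P_{\theta_i})\;\leq\;\sup_{\theta,\theta'\in T}D(\P_{\theta}\,\|\,\P_{\theta'})=d_{\text{KL}}(T),
\]
since every $\theta_i\in T$. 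Combining the last three displays, $\sup_{\theta}\E_{\theta}[\rho^2(\htheta,\theta)]\geq\frac{\epsilon^2}{4}\bigl(1-\frac{d_{\text{KL}}(T)+\log 2}{\log M}\bigr)$; as $\htheta$ was arbitrary, taking the infimum over $\htheta$ on the left and the suprema over $\epsilon>0$ and $T\subset\Theta$ on the right completes the argument.

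The only genuinely nontrivial ingredient is Fano's inequality itself, which is a standard information-theoretic fact that may simply be quoted (as in \cite{yu1997assouad} or any information-theory text); the rest is routine packing/testing bookkeeping. The one point requiring care is that the mutual information must be controlled through a prior supported on the packing points $\theta_1,\ldots,\theta_M$ (all lying in $T$), so that the final bound involves the Kullback--Leibler diameter $d_{\text{KL}}(T)$ rather than a quantity over all of $\Theta$; choosing $T$ so as to keep $d_{\text{KL}}(T)$ small while keeping $\mathcal{M}(T,\rho,\epsilon)$ large is precisely the leverage that the subsequent proof of Theorem~\ref{thm:lower} extracts via Lemma~\ref{KL-distance} and Corollary~\ref{corollary-packing}.
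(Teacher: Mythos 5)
This lemma is stated in the paper as a cited result from \cite{yu1997assouad} with no proof given, so there is nothing in the paper to compare your argument against; I will assess it on its own. Your proof is correct and is the standard ``Fano method'' derivation of this version of the lemma: you reduce estimation to an $M$-ary testing problem over a maximal $\epsilon$-packing inside $T$, use the triangle inequality plus packing separation to show that a testing error of $\hpsi\ne j$ forces $\rho(\htheta,\theta_j)\ge\epsilon/2$, lower-bound the supremum risk by the average testing error via the indicator (Markov) bound, invoke Fano's inequality with the uniform prior on the packing points, and control the mutual information by the KL diameter using $I(J;X)=\frac{1}{M}\sum_j D(\P_{\theta_j}\Vert\bar\P)\le\frac{1}{M^2}\sum_{i,j}D(\P_{\theta_j}\Vert\P_{\theta_i})\le d_{\text{KL}}(T)$, which holds by convexity of $D(\P\Vert\cdot)$. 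Finally, the order of quantifiers (for every $\htheta$, for every $T$ and $\epsilon$) is handled correctly so that the infimum over $\htheta$ and the suprema over $T,\epsilon$ land on the right sides of the inequality, and the degenerate case $M\le 1$ (where $\log M=0$ and the right-hand side is $-\infty$ or vacuous) is correctly set aside. Your observation in the last paragraph --- that the leverage of this formulation is the freedom to choose $T$ to trade off $d_{\text{KL}}(T)$ against $\mathcal{M}(T,\rho,\epsilon)$ --- is exactly how the paper uses it in the proof of Theorem~\ref{thm:lower}.
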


\subsection{Proof of Lower Bound}
For any fixed $\begin{bmatrix}
\bU_{(0)}, ~\bW_{(0)}
\end{bmatrix}\in\mathcal{O}(p_1, p_1)$ and  $ \begin{bmatrix}
\bV_{(0)}, ~\bZ_{(0)}
\end{bmatrix}\in \mathcal{O}(p_2, p_1)$ where $\bU_{(0)}\in\mathbb{R}^{p_1\times k}, \bV_{(0)}\in\mathbb{R}^{p_2\times k}, \bW_{(0)}\in\mathbb{R}^{p_1\times (p_1-k)}, \bV_{(0)}\in\mathbb{R}^{p_2\times (p_2-k)}$, define
\begin{align*}
\mathcal{H}_{\epsilon_0}=\Big\{\big(\bU, \bW, \bV, \bZ\big): ~ \begin{bmatrix}
\bU, ~\bW
\end{bmatrix}\in \mc{O}(p_1, p_1)~\text{with}~ \bU\in \mathbb{R}^{p_1\times k},~ \begin{bmatrix}
\bV, ~\bZ
\end{bmatrix}\in \mc{O}(p_2, p_1)\\~\text{with}~\bV\in \mathbb{R}^{p_2\times k}
,\|\bU-\bU_{(0)}\|_F\leq \epsilon_0, ~
[\bU, \bW]\begin{bmatrix}
\smallskip
\bV^\top\\
 \bZ^\top
\end{bmatrix}=[\bU_{(0)}, \bW_{(0)}]\begin{bmatrix}
\smallskip
\bV_{(0)}^\top\\
 \bZ_{(0)}^\top
\end{bmatrix}
 \Big\}. 
\end{align*}
For any fixed $\sx\in\mathbb{S}^{p_1}_+, \sy\in\mathbb{S}^{p_2}_+$ with $\kappa(\sx)=\kappa_x, \kappa(\sy)=\kappa_y$, consider the parametrization $\s_{xy}=\sx\px\Lambda\py^\top\sy$, for   $0\leq \lambda_{k+1}<\lambda_k<1$, define
\begin{align*}
\mathcal{T}_{\epsilon_0}=\Big\{
\s=\begin{bmatrix} \sx & \sx^{1/2}(\lambda_k\bU\bV^\top+\lambda_{k+1}\bW\bZ^\top)\sy^{1/2}\\ \sy^{1/2}(\lambda_k\bV\bU^\top+\lambda_{k+1}\bZ\bW^\top) \sx^{1/2}& \sy \end{bmatrix},\\
 \px=\sx^{-1/2}[\bU,\bW],  \py=\sy^{-1/2}[\bV, \bZ], \big(\bU, \bW, \bV, \bZ\big)\in\mathcal{H}_{\epsilon_0}  \Big\}.
\end{align*}
It is straightforward to verify that $\mathcal{T}_{\epsilon_0}\subset \mathcal{F}(p_1, p_2, k, \lambda_k,\lambda_{k+1},\kappa_x, \kappa_y)$. For any $\s_{(i)}\in\mathcal{T}_{\epsilon_0}, ~i=1, 2$, they yield to the parametrization, 
\begin{align*}
\s_{(i)}=\begin{bmatrix} \sx & \sx^{1/2}(\lambda_k\bU_{(i)}\bV_{(i)}^\top+\lambda_{k+1}\bW_{(i)}\bZ_{(i)}^\top)\sy^{1/2}\\ \sy^{1/2}(\lambda_k\bV_{(i)}\bU_{(i)}^\top+\lambda_{k+1}\bZ_{(i)}\bW_{(i)}^\top) \sx^{1/2}& \sy \end{bmatrix},
\end{align*}
where $\big(\bU_{(i)}, \bW_{(i)}, \bV_{(i)}, \bZ_{(i)}\big)\in\mathcal{H}_{\epsilon_0}$ and the leading-$k$ canonical vectors are $\pxk^{(i)}=\sx^{-\half}\bU_{(i)}, \pyk^{(i)}=\sy^{-\half}\bV_{(i)}$. We define a semi-metric on $\mathcal{T}_{\epsilon_0}$ as 
\[
\rho(\s_{(1)}, \s_{(2)})=\left\|\bP_{\sx^{\half}\pxk^{(1)}}-\bP_{\sx^{\half}\pxk^{(2)}}\right\|_F=\left\| \bP_{\bU_{(1)}}-\bP_{\bU_{(2)}}\right\|_F.
\]
By Lemma~\ref{KL-distance}, 
\begin{align*}
D(\P_{\s_1}||\P_{\s_2})=\frac{n\Delta^2(1+\lambda_k\lambda_{k+1})}{2(1-\lambda_k^2)(1-\lambda_{k+1}^2)}\|\bU_{(1)}\bV_{(1)}^\top-\bU_{(2)}\bV_{(2)}^\top\|_F^2.
\end{align*}
Further by the definition of $d_{KL} (T)$,
\begin{align}
d_{KL} (T)&=\frac{n\Delta^2(1+\lambda_k\lambda_{k+1})}{2(1-\lambda_k^2)(1-\lambda_{k+1}^2)}\sup\limits_{\s_{(1)}, \s_{(2)}\in \mathcal{T}_{\epsilon_0}}\|\bU_{(1)}\bV_{(1)}^\top-\bU_{(2)}\bV_{(2)}^\top\|_F^2.
\label{eq-KL-obj}
\end{align}
To bound the Kullback-Leibler diameter, for any $\s_{(1)}, \s_{(2)}\in \mathcal{T}_{\epsilon_0}$, by definition, 
\[
[\bU_{(1)}, \bW_{(1)}]\begin{bmatrix}
\smallskip
\bV_{(1)}^\top\\
 \bZ_{(1)}^\top
\end{bmatrix}=[\bU_{(2)}, \bW_{(2)}]\begin{bmatrix}
\smallskip
\bV_{(2)}^\top\\
 \bZ_{(2)}^\top
\end{bmatrix},
\]
which implies that they are singular value decompositions of the same matrix. Therefore, there exists $\bQ\in\mathcal{O}(p_1, p_1)$ such that 
\begin{align}
[\bU_{(2)}, \bW_{(2)}]=[\bU_{(1)}, \bW_{(1)}]\bQ~,~[\bV_{(2)},  \bZ_{(2)}]=[\bV_{(1)},  \bZ_{(1)}]\bQ.
\label{eq-UV}
\end{align}
Decompose $\bQ$ into four blocks such that 
\[
\bQ=\begin{bmatrix}
\bQ_{11}& \bQ_{12}\\
\bQ_{21} & \bQ_{22}
\end{bmatrix}.
\]
Substitute into \eqref{eq-UV}, 
\[
\bU_{(2)}=\bU_{(1)}\bQ_{11}+\bW_{(1)}\bQ_{21},~ \bV_{(2)}=\bV_{(1)}\bQ_{11}+\bZ_{(1)}\bQ_{21}.
\]
Then, 
\begin{align*}
\|\bU_{(2)}-\bU_{(1)}\|_F^2&=\|\bU_{(1)}(\bQ_{11}-\bI_{k})+\bW_{(1)}\bQ_{21}\|_F^2\\
&=\|\bU_{(1)}(\bQ_{11}-\bI_{k})\|_F^2+\|\bW_{(1)}\bQ_{21}\|_F^2\\
&=\|\bQ_{11}-\bI_{k}\|_F^2+\|\bQ_{21}\|_F^2.
\end{align*}
The second equality is due to the fact that $\bU_{(1)}$ and $\bW_{(1)}$ have orthogonal column space and the third equality is valid because $\bU_{(1)}, \bW_{(1)}\in\mathcal{O}(p_1, k)$. By the same argument, we will have 
\[
\|\bV_{(2)}-\bV_{(1)}\|_F^2=\|\bQ_{11}-\bI_{k}\|_F^2+\|\bQ_{21}\|_F^2.
\]
Notice that 
\begin{align*}
\|\bU_{(1)}\bV_{(1)}^\top-\bU_{(2)}\bV_{(2)}^\top\|_F^2&=\|(\bU_{(1)}-\bU_{(2)})\bV_{(1)}+\bU_{(2)}(\bV_{(1)}-\bV_{(2)})\|_F^2\\
&\leq 2\|\bU_{(1)}-\bU_{(2)}\|_F^2+2\|\bV_{(1)}-\bV_{(2)}\|_F^2\\
&=4\|(\bU_{(1)}-\bU_{(2)})\|_F^2\\
&\leq 8\left(\|(\bU_{(1)}-\bU_{(0)})\|_F^2+\|(\bU_{(0)}-\bU_{(2)})\|_F^2\right)\\
&\leq 16\epsilon_0^2.
\end{align*}
Then, substitute into \eqref{eq-KL-obj}
\begin{align}
d_{KL} (T)\leq \frac{8n\Delta^2(1+\lambda_k\lambda_{k+1})}{(1-\lambda_k^2)(1-\lambda_{k+1}^2)}\epsilon_0^2.
\label{eq-KL-bound}
\end{align}
Let $\mathcal{B}_{\epsilon_0}=\{\bU\in O(p_1, k): \|\bU-\bU_{(0)}\|_F\leq \epsilon_0  \}$. Under the semi-metric $\wt{\rho}(\bU_{(1)}, \bU_{(2)})=\|\bU_{(1)}\bU_{(1)}^\top - \bU_{(2)}\bU_{(2)}^\top\|_F$,  we claim that the packing number of $\mathcal{H}_{\epsilon_0}$ is lower bounded by the packing number of $\mathcal{B}_{\epsilon_0}$. To prove this claim, it suffices to show that for any $\bU\in\mathcal{B}_{\epsilon_0}$, there exists corresponding $\bW, \bV, \bZ$ such that $\left(\bU, \bW, \bV, \bZ\right)\in \mathcal{H}_{\epsilon_0}$. First of all, by definition, $\|\bU-\bU_{0}\|_F\leq \epsilon_0$. Let $\bW\in\mathcal{O}(p_1, p_1-k)$ be the orthogonal complement of $\bU$. Then $[\bU, \bW]\in\mathcal{O}(p_1, p_1)$ and therefore there exists $\bQ\in \mathcal{O}(p_1, p_1)$ such that 
\[
[\bU, \bW]=[\bU_{(0)}, \bW_{(0)}]\bQ.
\]
Set $[\bV, \bZ]=[\bV_{(0)}, \bZ_{(0)}]\bQ\in \mathcal{O}(p_2, p_1)$, then 
\[
[\bU, \bW]\begin{bmatrix}
\smallskip
\bV^\top\\
 \bZ^\top
\end{bmatrix}=[\bU_{(0)}, \bW_{(0)}]\begin{bmatrix}
\smallskip
\bV_{(0)}^\top\\
 \bZ_{(0)}^\top
\end{bmatrix},
\]
which implies $\left(\bU, \bW, \bV, \bZ\right)\in \mathcal{H}_{\epsilon_0}$. Let 
\[
\epsilon=\alpha\epsilon_0=c\left(\sqrt{k\wedge(p_1-k)}\wedge\sqrt{\frac{(1-\lambda_k^2)(1-\lambda_{k+1}^2)}{n\Delta^2(1+\lambda_k\lambda_{k+1})}k(p_1-k)}\right),
\]
where $c\in (0, 1)$ depends on $\alpha$ and is chosen small enough such that $\epsilon_0=\epsilon/\alpha\in (0, \sqrt{2[k\wedge (p_1-k)]}\,)$. By Corollary~\ref{corollary-packing}, 
\[
\mathcal{M}(\mathcal{T}_{\epsilon_0}, \rho, \alpha\epsilon_0)=\mathcal{M}(\mathcal{H}_{\epsilon_0}, \wt{\rho}, \alpha\epsilon_0)\geq \mathcal{M}(\mathcal{B}_{\epsilon_0}, \wt{\rho}, \alpha\epsilon_0)\geq \left(\frac{1}{C\alpha}\right)^{k(p_1-k)}.
\]
Apply Lemma~\ref{Lemma-exp-Fano} with $\mathcal{T}_{\epsilon_0}, \rho, \epsilon$, 
\[
\inf\limits_{\pxhk}\sup_{\s\in  \mathcal{F}}  \E\left[~\left\| \bP_{\sx^{\half}\pxhk}-\bP_{\sx^{\half}\pxk}\right\|_F^2 \right]\geq \sup\limits_{T\subset\Theta}\sup\limits_{\epsilon>0} \frac{\epsilon^2}{4}\left(1- \frac{8c^2k(p_1-k)+\text{log} 2}{k(p_1-k)\text{log}\frac{1}{C\alpha}}\right).
\]
Choose $\alpha$ small enough such that 
\[
1- \frac{8c^2k(p_1-k)+\text{log} 2}{k(p_1-k)\text{log}\frac{1}{C\alpha}}\geq \frac{1}{2}.
\]
Then the lower bound is reduced to
\begin{align*}
\inf\limits_{\pxhk}\sup_{\s\in  \mathcal{F}}  \E\left[~ \left\| \bP_{\sx^{\half}\pxhk}-\bP_{\sx^{\half}\pxk}\right\|_F^2 \right]&\geq \frac{c^2}{8} \left\{\frac{(1-\lambda_k^2)(1-\lambda_{k+1}^2)}{n\Delta^2(1+\lambda_k\lambda_{k+1})}k(p_1-k)\wedge k\wedge(p_1-k)\right\}\\
&\geq C^2k\left\{\left(\frac{(1-\lambda_{k}^2)(1-\lambda_{k+1}^2)}{\Delta^2}\frac{p_1-k}{n}\right)\wedge 1\wedge \frac{p_1-k}{k}\right\}
\end{align*}
By symmetry, 
\[
\inf\limits_{\pyhk}\sup_{\s\in  \mathcal{F}}  \E\left[~ \left\| \bP_{\sy^{\half}\pyhk}-\bP_{\sy^{\half}\pyk}\right\|_F^2 \right]\geq C^2k\left\{\left(\frac{(1-\lambda_{k}^2)(1-\lambda_{k+1}^2)}{\Delta^2}\frac{p_1-k}{n}\right)\wedge 1\wedge \frac{p_1-k}{k}\right\}
\]
The lower bound for operator norm error can be immediately obtained by noticing that $\bP_{\sy^{\half}\pyhk}-\bP_{\sy^{\half}\pyk}$ has at most rank $2k$ and 
\[
\left\| \bP_{\sx^{\half}\pxhk}-\bP_{\sx^{\half}\pxk}\right\|^2\geq \frac{1}{2k}\left\| \bP_{\sx^{\half}\pxhk}-\bP_{\sx^{\half}\pxk}\right\|_F^2
\]

\subsection{Proof of Lemma~\ref{KL-distance}}
\label{sec::KL}
By simple algebra, the Kullback-Leibler divergence between two multivariate gaussian distributions satisfies
\[
D(\P_{\s_{(1)}}||\P_{\s_{(2)}})=\frac{n}{2}\Big\{\text{Tr}\left(\s_{(2)}^{-1}(\s_{(1)}-\s_{(2)})\right)-\text{log det}(\s_{(2)}^{-1}\s_{(1)})\Big\}.
\]
Notice that 
\[
\s_{(i)}=\begin{bmatrix}
\sx^{\half} & \\
&\sy^{\half}
\end{bmatrix}\bOmega_{(i)}\begin{bmatrix}
\sx^{\half} & \\
&\sy^{\half}
\end{bmatrix},
\]
where 
\[
\bOmega_{(i)}=\begin{bmatrix} \bI_{p_1} & \lambda_1\bU_{(i)}\bV_{(i)}^\top+\lambda_2\bW_{(i)}\bZ_{(i)}^\top\\ \lambda_1\bV_{(i)}\bU_{(i)}^\top+\lambda_2\bZ_{(i)}\bW_{(i)}^\top& \bI_{p_2}\end{bmatrix}.
\]
Then, 
\[
D(\P_{\s_{(1)}}||\P_{\s_{(2)}})=\frac{n}{2}\Big\{\text{Tr}(\bOmega_{(2)}^{-1}\bOmega_{(1)})-(p_1+p_2)-\text{log det}(\bOmega_{(2)}^{-1}\bOmega_{(1)})\Big\}.
\]
Also notice that 
\begin{align*}
\bOmega_{(i)}=\begin{bmatrix} \bI_{p_1} & \\ & \bI_{p_2}\end{bmatrix}+\frac{\lambda_1}{2}\begin{bmatrix}
\bU_{(i)}\\
\bV_{(i)}
\end{bmatrix}\begin{bmatrix}
\bU_{(i)}^\top&\bV_{(i)}^\top
\end{bmatrix}-\frac{\lambda_1}{2}\begin{bmatrix}
\bU_{(i)}\\
-\bV_{(i)}
\end{bmatrix}\begin{bmatrix}
\bU_{(i)}^\top&-\bV_{(i)}^\top
\end{bmatrix}\\
+\frac{\lambda_2}{2}\begin{bmatrix}
\bW_{(i)}\\
\bZ_{(i)}
\end{bmatrix}\begin{bmatrix}
\bW_{(i)}^\top&\bZ_{(i)}^\top
\end{bmatrix}-\frac{\lambda_2}{2}\begin{bmatrix}
\bW_{(i)}\\
-\bZ_{(i)}
\end{bmatrix}\begin{bmatrix}
\bW_{(i)}^\top&-\bZ_{(i)}^\top
\end{bmatrix}.
\end{align*}
Therefore $\bOmega_{(1)}, \bOmega_{(2)}$ share the same set of eigenvalues: $1+\lambda_1$ with multiplicity $k$, $1-\lambda_1$ with multiplicity $k$, $1+\lambda_2$ with multiplicity $p_1-k$, $1-\lambda_2$ with multiplicity $p_1-k$ and $1$ with multiplicity $2(p_2-p_1)$. This implies $\text{log det}(\bOmega_{(2)}^{-1}\bOmega_{(1)}))=0$. On the other hand, 
by block inversion formula, we can compute 
\[
\bOmega_{(2)}^{-1}=\begin{bmatrix}
\bI_{p_1}+ \frac{\lambda_1^2}{1-\lambda_1^2}\bU_{(2)}\bU_{(2)}^\top+\frac{\lambda_2^2}{1-\lambda_2}\bW_{(2)}\bW_{(2)}^\top& -\frac{\lambda_1}{1-\lambda_1^2}\bU_{(2)}\bV_{(2)}^\top-\frac{\lambda_2}{1-\lambda_2}\bW_{(2)}\bZ_{(2)}^\top\\
 -\frac{\lambda_1}{1-\lambda_1^2}\bV_{(2)}\bU_{(2)}^\top-\frac{\lambda_2}{1-\lambda_2}\bZ_{(2)}\bW_{(2)}^\top&\bI_{p_2}+ \frac{\lambda_1^2}{1-\lambda_1^2}\bV_{(2)}\bV_{(2)}^\top+\frac{\lambda_2^2}{1-\lambda_2}\bZ_{(2)}\bZ_{(2)}^\top
\end{bmatrix}.
\]
Divide $\bOmega_{(2)}^{-1}\bOmega_{(1)}$ into blocks such that
\begin{align*}
\bOmega_{(2)}^{-1}\bOmega_{(1)}=\begin{bmatrix}
\bJ_{11} &  \bJ_{12} \\
\bJ_{21}&\bJ_{22}
\end{bmatrix}~~where ~~\bJ_{11}\in\mathbb{R}^{p_1\times p_1},~\bJ_{22}\in\mathbb{R}^{p_2\times p_2},
\end{align*}
and 
\begin{align*}
\bJ_{11}&=\frac{\lambda_1^2}{1-\lambda_1^2}(\bU_{(2)}\bU_{(2)}^\top-\bU_{(2)}\bV_{(2)}^\top\bV_{(1)}\bU_{(1)}^\top)+\frac{\lambda_2^2}{1-\lambda_2^2}(\bW_{(2)}\bW_{(2)}-\bW_{(2)}\bZ_{(2)}^\top\bZ_{(1)}\bW_{(1)}^\top)\\
&~~~-\frac{\lambda_1\lambda_2}{1-\lambda_1^2}(\bU_{(2)}\bV_{(2)}^\top\bZ_{(1)}\bW_{(1)}^\top)-\frac{\lambda_1\lambda_2}{1-\lambda_2^2}(\bW_{(2)}\bZ_{(2)}^\top\bV_{(1)}\bU_{(1)}^\top)\\
\bJ_{22}&=\frac{\lambda_1^2}{1-\lambda_1^2}(\bV_{(2)}\bV_{(2)}^\top-\bV_{(2)}\bU_{(2)}^\top\bU_{(1)}\bV_{(1)}^\top)+\frac{\lambda_2^2}{1-\lambda_2^2}(\bZ_{(2)}\bZ_{(2)}-\bZ_{(2)}\bW_{(2)}^\top\bW_{(1)}\bZ_{(1)}^\top)\\
&~~~-\frac{\lambda_1\lambda_2}{1-\lambda_1^2}(\bV_{(2)}\bU_{(2)}^\top\bW_{(1)}\bZ_{(1)}^\top)-\frac{\lambda_1\lambda_2}{1-\lambda_2^2}(\bZ_{(2)}\bW_{(2)}^\top\bU_{(1)}\bV_{(1)}^\top).
\end{align*}
We spell out the algebra for $tr(\bJ_{11})$, and $tr(\bJ_{22})$ can be computed in exactly the same fashion.
\begin{align*}
tr(\bU_{(2)}\bU_{(2)}^\top-\bU_{(2)}\bV_{(2)}^\top\bV_{(1)}\bU_{(1)}^\top)&=\frac{1}{2}tr(\bU_{(2)}\bV_{(2)}^\top\bV_{(2)}\bU_{(2)}^\top+\bU_{(1)}\bV_{(1)}^\top\bV_{(1)}\bU_{(1)}^\top-2\bU_{(2)}\bV_{(2)}^\top\bV_{(1)}\bU_{(1)}^\top)\\
&=\frac{1}{2}\|\bU_{(1)}\bV_{(1)}^\top-\bU_{(2)}\bV_{(2)}\|_F^2.
\end{align*}
Similarly, 
\begin{align*}
tr(\bW_{(2)}\bW_{(2)}-\bW_{(2)}\bZ_{(2)}^\top\bZ_{(1)}\bW_{(1)}^\top)=\frac{1}{2}\|\bW_{(1)}\bZ_{(1)}^\top-\bW_{(2)}\bZ_{(2)}\|_F^2.
\end{align*}
By the assumption \eqref{eq::kl-construction}, i.e., $\bU_{(1)}\bV_{(1)}^\top+\bW_{(1)}\bZ_{(1)}^\top=\bU_{(2)}\bV_{(2)}^\top+\bW_{(2)}\bZ_{(2)}^\top$, we have
\begin{align*}
tr(\bW_{(2)}\bW_{(2)}-\bW_{(2)}\bZ_{(2)}^\top\bZ_{(1)}\bW_{(1)}^\top)=\frac{1}{2}\|\bU_{(1)}\bV_{(1)}^\top-\bU_{(2)}\bV_{(2)}\|_F^2.
\end{align*}
Further, 
\begin{align*}
tr(\bU_{(2)}\bV_{(2)}^\top\bZ_{(1)}\bW_{(1)}^\top)&=tr\left(\bU_{(2)}\bV_{(2)}^\top(\bU_{(2)}\bV_{(2)}^\top+\bW_{(2)}\bZ_{(2)}^\top-\bU_{(1)}\bV_{(1)}^\top)^\top\right)\\
&=tr\left(\bU_{(2)}\bV_{(2)}^\top(\bU_{(2)}\bV_{(2)}^\top-\bU_{(1)}\bV_{(1)}^\top)^\top\right)\\
&=\frac{1}{2}\|\bU_{(1)}\bV_{(1)}^\top-\bU_{(2)}\bV_{(2)}\|_F^2,
\end{align*}
and by the same argument, 
\begin{align*}
tr(\bW_{(2)}\bZ_{(2)}^\top\bV_{(1)}\bU_{(1)}^\top)=\frac{1}{2}\|\bU_{(1)}\bV_{(1)}^\top-\bU_{(2)}\bV_{(2)}\|_F^2.
\end{align*}
Sum these equations, 
\begin{align*}
\text{tr}(\bJ_{11})&=\frac{1}{2}\left\{ \frac{\lambda_1^2}{1-\lambda_1^2}+\frac{\lambda_2^2}{1-\lambda_2^2}-\frac{\lambda_1\lambda_2}{1-\lambda_1^2}-\frac{\lambda_1\lambda_2}{1-\lambda_2^2}
\right\}\|\bU_{(1)}\bV_{(1)}^\top-\bU_{(2)}\bV_{(2)}\|_F^2\\
&=\frac{\Delta^2(1+\lambda_1\lambda_2)}{2(1-\lambda_1^2)(1-\lambda_2^2)}\|\bU_{(1)}\bV_{(1)}^\top-\bU_{(2)}\bV_{(2)}\|_F^2.
\end{align*}
Repeat the argument for $\bJ_{22}$, one can show that 
\begin{align*}
\text{tr}(\bJ_{22})=\text{tr}(\bJ_{11})=\frac{\Delta^2(1+\lambda_1\lambda_2)}{2(1-\lambda_1^2)(1-\lambda_2^2)}\|\bU_{(1)}\bV_{(1)}^\top-\bU_{(2)}\bV_{(2)}\|_F^2.
\end{align*}
Therefore, 
\begin{align*}
D(\P_{\s_{(1)}}||\P_{\s_{(2)}})&=\frac{n}{2} tr(\bOmega_{(2)}^{-1}\bOmega_{(1)})=\frac{n}{2}\left(tr(\bJ_{11})+tr(\bJ_{22})\right)\\
&=\frac{n\Delta^2(1+\lambda_1\lambda_2)}{2(1-\lambda_1^2)(1-\lambda_2^2)}\|\bU_{(1)}\bV_{(1)}^\top-\bU_{(2)}\bV_{(2)}\|_F^2.
\end{align*}

\bibliographystyle{chicago}
\bibliography{cca}
\end{document}